\documentclass{birkjour}
%
%
%

\usepackage{amsmath, amsthm, amscd, amsfonts,amssymb, graphicx}
\usepackage{booktabs}
\textwidth=16.2cm \oddsidemargin=-1.0cm \evensidemargin=-1.0cm

 \newtheorem{theorem}{Theorem}[section]
 \newtheorem{corollary}[theorem]{Corollary}
 \newtheorem{lemma}[theorem]{Lemma}
 \newtheorem{example}[theorem]{Example}
 \newtheorem{proposition}[theorem]{Proposition}
 \theoremstyle{definition}
 \newtheorem{definition}[theorem]{Definition}
 
 \theoremstyle{remark}
 \newtheorem{remark}[theorem]{Remark}
 \numberwithin{equation}{section}
\usepackage{color}
\textwidth=12.5cm
\usepackage{color}
\usepackage{graphicx}
\usepackage{caption}
\usepackage{subcaption}

\usepackage{float}

\usepackage[linesnumbered,lined,boxed,commentsnumbered]{algorithm2e}
\usepackage{matlab-prettifier}
\SetKwInput{KwData}{Input}
\SetKwInput{KwResult}{Output}

\begin{document}

%
%
%
%
%
%
%
%
%

\title[Continuous Approach]
 {Continuous Approach to Phase (Norm) Retrieval Frames}

\author[R. Farshchian]{Ramin Farshchian}
\address{Department of Mathematical sciences,   Ferdowsi University of Mashhad,Mashhad, Iran.}
\email{ramin.farshchian@mail.um.ac.ir}

\author[R. A.  Kamyabi-Gol]{Rajab Ali  Kamyabi-Gol}
\address{Department of Mathematical sciences, Faculty of Math, Ferdowsi University of Mashhad and Center of Excellence in Analysis on Algebraic Structures (CEAAS), Mashhad, Iran.}
\email{kamyabi@um.ac.ir}

\author[F. Arabyani]{Fahimeh Arabyani-Neyshaburi}
\address{Department of Mathematical sciences,   Ferdowsi University of Mashhad,Mashhad, Iran.}
\email{fahimeh.arabyani@gmail.com}

\author[F. Esmaeelzadeh]{Fatemeh Esmaeelzadeh}
\address{Department of Mathematics, Bojnourd Branch, Islamic Azad University, Bojnourd, Iran}
\email{esmaeelzadeh@bojnourdiau.ac.ir}

\vspace{-2.5cm}

\vspace{1.7cm}

\begin{abstract}

This paper investigates the properties of continuous frames, with a particular focus on phase retrieval and norm retrieval in the context of Hilbert spaces. We introduce the concept of continuous near-Riesz bases and prove their invariance under invertible operators. Some equivalent conditions for  phase and norm retrieval property of continuous frames are presented. We study the stability of phase retrieval under perturbations. Furthermore, tensor product frames for separable Hilbert spaces are studied, and we establish the equivalence of phase retrieval and norm retrieval properties between components and their tensor products. 

\end{abstract}

\maketitle

\maketitle
\textbf{Key words:} Continuous frames,  phase retrieval, norm retrieval, Riesz bases, continuous near-Riesz bases

\textbf{Mathematics Subject Classification} 42C15 · 46B15 . 94A12 · 94A20


\section{Introduction}

The concept of frames, introduced by Duffin and Schaeffer in 1952 \cite{duffin1952class} in the context of nonharmonic Fourier series, has developed into a foundation of modern harmonic analysis and signal processing. Frames provide a robust generalization of orthonormal bases in Hilbert spaces, offering the flexibility of redundancy while retaining the ability to represent vectors faithfully. The creative work of Daubechies, Grossman, and Meyer \cite{daubechies1986painless} significantly advanced frame theory by its use in wavelet and Gabor frame constructions, cementing its role in signal and image processing and sampling theory (\cite{shen2006image}, \cite{aldroubi2001nonuniform}).
A major development in frame theory is the introduction of continuous frames, also known as generalized frames, where the index set extends beyond discrete sets to locally compact spaces equipped with Radon measures. This concept, independently proposed by Kaiser \cite{kaiser1994friendly} and by Ali, Antoine, and Gazeau \cite{ali1993continuous}, extend using of frames, particularly in mathematical physics and quantum mechanics. Gabardo and Han \cite{gabardo2003frames} referred to these structures as frames associated with measurable spaces, while Askari-Hemmat, Dehghan, and Radjabalipour \cite{askari2001generalized} introduced the term generalized frames. Continuous frames are closely related to coherent states, making them a fundamental tool in the study of quantum systems \cite{ali1993continuous}.

For harmonic analysts, continuous frames offer a flexible framework to address complex problems such as phase retrieval and norm retrieval. These problems, central in modern analysis, involve reconstructing information from the magnitudes of frame coefficients. When the underlying measure space is taken as the natural numbers equipped with the counting measure, continuous frames reduce to the classical discrete frames, bridging the gap between discrete and continuous settings. Comprehensive treatments of frame theory and its diverse applications can be found in \cite{ali2000coherent, chui1992introduction, fornasier2005continuous, thirulogasanthar2004continuous,grohs2024stable}.
The study of phase retrieval frames, which enable the reconstruction of signals from the magnitudes of frame coefficients, is particularly significant. This process, essential in various fields such as X-ray crystallography and speech recognition, addresses situations where only magnitude information is available. Norm retrieval focuses on reconstructing the norm of signals from frame coefficients. Together, these properties provide critical insights into the structural and functional aspects of Hilbert spaces. 

This paper studies continuous frames with an emphasis on their phase and norm retrieval properties. We begin by establishing a norm bound for Bessel mappings on measure spaces that satisfy a positive infimum measure condition, ensuring boundedness and providing a basis for further investigations. We also introduce the concept of continuous near-Riesz bases, which generalize formal Riesz bases to the setting of continuous frames. We demonstrate their invariance under invertible operators, highlighting their stability under transformations.
Furthermore, we extend various conditions for phase retrieval in continuous frames setting, including integral inequalities and connections to the $\mu$-complete property. We also address the stability of phase retrieval under perturbations, demonstrating that while specific conditions can destabilize phase retrieval, stability can be ensured under bounded perturbations in finite measure spaces.

In the context of tensor product Hilbert spaces, we investigate tensor product frames, establishing that the phase and norm retrieval properties of these frames are inherently linked to those of their component frames. This result provides a deeper understanding of the interplay between frame properties in product spaces and their implications for practical applications.

The findings presented here expand the theoretical framework of continuous frames, with direct implications for signal processing and quantum mechanics, where phase and norm retrieval are critical for reconstructing signals from incomplete or noisy data. 
This study provides a comprehensive framework for analyzing continuous frames and their retrieval properties, offering new insights and tools for addressing practical challenges in harmonic analysis and beyond.

The paper is divided into five main parts.
It begins with basic concepts of continuous frames and the establishment of norm bounds for Bessel mappings in measure spaces with a positive infimum condition.  
The concept of continuous near-Riesz bases is introduced, alongside their stability under invertible operators. The equivalence of various conditions for phase retrieval is investigated, including integral inequalities and the $\mu$-complete property. Stability analysis of phase retrieval under perturbations is conducted, with specific attention to finite measure spaces. Finally, tensor product frames are studied, demonstrating the connection between the retrieval properties of component frames and their tensor products.

\section{Continuous Frames}
In this section we review the notion of continuous frames, which allow us to analyze and reconstruct elements in a Hilbert space using $(X, \mu)$ as a measure space.
The idea of continuous frames, was introduced by Ali, Antoine, and Gazeau \cite{ali1993continuous} and Kaiser \cite{kaiser1994friendly}. Moreover, we provide a brief discussion on the norm boundeness of continuous frames. 
Through this paper we assume that Hilbert spaces are separable and the measure space $(X, \mu)$ is $\sigma$-finite. Indeed every continuous frame, or more generally a continuous Bessel family is supported on a $\sigma$-finite set. This condition arises from the existence of a continuous frame, ensuring that the synthesis and analysis operators are well-defined. See proposition 2.1 of \cite{bownik2018lyapunov}. Also if $(X,\mu)$ is a $\sigma$-finite measure space, then there exists a continuous tight frame with respect to $(X,\mu)$ \cite{balazs2012multipliers}.
There are several ways to develop a theory of integrals for functions with values in a topological vector space. We shall adopt the "weak" approach,
in which one reduces everything to scalar functions by applying linear functionals.

\begin{definition}
Let $H$ be a Hilbert space and $(X, \mu)$ be a measure space with positive measure $\mu$. The function $F: X \rightarrow H$ is called to be a continuous frame with respect to $(X, \mu)$, whenever,
\begin{itemize}
\item[1.] $x \mapsto F(x)$ is weakly measurable, i.e., for all $f \in H$, the mapping $X \rightarrow \mathbb{C}, x \mapsto\left\langle f, F(x)\right\rangle$ is measurable.\\
\item[2.] There exist constants $A, B>0$ such that
\begin{align}\label{cts_frame}
A\|f\|^2 \leq \int_{X}|\langle f, F(x)\rangle|^2 \mathrm{~d} \mu(x) \leq B\|f \|^2, \quad \text { for all } f \in H.
\end{align}
\end{itemize}
\end{definition}
The constants $A$ and $B$ are called frame bounds. A frame $F$ is said to be tight if we can choose $A=B$; if furthermore, $A=$ $B=1$, then $F$ is called a Parseval frame. 
We recall that, if $F: X \rightarrow H$ is weakly measurable and the upper bound in the inequality (\ref{cts_frame}) holds, then $F$ is said to be a Bessel mapping. 
The analysis operator, denoted by \( T_F \), maps elements of \( H \) to the Hilbert space \( L^2(X, \mu) \) and is defined by  
\[
T_F f(x) = \langle f, F(x) \rangle, \quad \text{for all } f \in H \text{ and } x \in X.
\]  
The synthesis operator, denoted by \( T_F^* \), is the adjoint of the analysis operator. It maps elements of \( L^2(X, \mu) \) to \( H \) and is given by  
\[
T_F^* g = \int_X g(x) F(x) \, d\mu(x), \quad \text{for } g \in L^2(X, \mu).
\]  
The frame operator, denoted by \( S \), is the composition of the synthesis and analysis operators. 
$$
S f = T_F^* T_F f = \int_X \langle f, F(x) \rangle F(x) \, d\mu(x) \quad \text{for} \quad f \in H.
$$
A Bessel mapping $F : X \rightarrow H$ is termed $\mu$-complete if
$$
\operatorname{cspan} \left\{ F(x) \mid x \in X \right\} := \left\{\; \int_X \phi(x) F(x) d\mu(x) \; \mid \; \phi \in L^2(X) \; \right\}  
$$
is dense in $H$, where $\int_X \phi(x) F(x) d\mu(x)$ consider in a weak sense. A mapping \( F: X \to H \) is called a continuous Riesz basis for \( H \) with respect to \((X, \mu)\), if the following holds.  
\begin{itemize}  
\item[1.]  The family \( \{F(x)\}_{x \in X} \) is \(\mu\)-complete.
\item[2.] There exist constants \( A, B > 0 \) (called the Riesz basis bounds) such that for every \( \phi \in L^2(X) \) and every measurable subset \( X_1 \subset X \) with \(\mu(X_1) < +\infty\), the following inequality holds:  

\begin{align*}  
A \left( \int_{X_1} |\phi(x)|^2 \, d\mu(x) \right)^{1/2}  
\leq \left\| \int_{X_1} \phi(x) F(x) \, d\mu(x) \right\|  
\leq B \left( \int_{X_1} |\phi(x)|^2 \, d\mu(x) \right)^{1/2}.  
\end{align*}  
\end{itemize}  
The integral is understood in the weak sense.

The study of continuous frames will pass through the measure theory. We now turn our attention to various types of measures characterized by their atomic structure. An atom is a measurable set that cannot be subdivided into smaller sets of positive measure. This concept is analogous to the idea of an atom in physics as an indivisible unit. More precisely, a measurable set $E$ of positive measure is an atom if for any measurable subset $F$ of $E$ either $\mu(F)=0$ or $\mu(E - F)=0$. A measure is non-atomic or atomless if every set of positive measure can be divided into two sets of positive measures. Such measures are continuous in a sense and do not contain atoms. In fact, in a non-atomic measure space, every measurable set of positive measure can be split into two disjoint measurable sets, each having positive measure. 
A measure is called purely atomic or simply atomic if every measurable set of positive measure contains an atom. In other words, a purely atomic measure is one where every set of positive measure can be partitioned into atoms.
Every measure can be uniquely decomposed as a sum of a purely atomic and a non-atomic measure. See \cite{johnson1970atomic}.

\begin{lemma}\cite{askari2001generalized}\label{atom}
Let $(X, \mu)$ be a measure space, and define $\eta=\inf \{\mu(E):  \; 0<\mu(E)<\infty\}$. The following assertions are true:
\begin{itemize}
\item[(a)] If $\eta=0$, then there exists a sequence of disjoint measurable sets $F_1, F_2, \ldots$ such that $\mu\left(F_n\right)>0$ and $\lim _{n \rightarrow \infty} \mu\left(F_n\right)=0$.
\item[(b)] If $\eta>0$, then every set of positive finite measure is a finite union of disjoint atoms.
\item[(c)] Every measurable function is $\mu$-almost constant on an atom.
\end{itemize}
\end{lemma}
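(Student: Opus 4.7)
The plan is to prove the three parts separately, using the $\sigma$-finiteness of $(X,\mu)$ throughout and the atomic/non-atomic decomposition of $\mu$ recalled just above the lemma.

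For part (a), I would first record the useful observation that in a $\sigma$-finite space every atom must carry finite positive measure: if $A$ is an atom and $X = \bigcup_n X_n$ with $\mu(X_n)<\infty$, then each $A\cap X_n$ has measure $0$ or $\mu(A)$ by the atom property, and the finiteness of $\mu(X_n)$ forces $\mu(A)<\infty$. Next, I would split into cases according to whether the non-atomic component of $\mu$ has positive measure. If it does, by $\sigma$-finiteness I pick a subset $E$ of the non-atomic part with $0<\mu(E)<\infty$ and iteratively halve it: non-atomicity ensures each measurable subset of positive measure splits into two disjoint pieces of positive measure, and iterating produces disjoint $F_n$ with $\mu(F_n)\le 2^{-n}\mu(E)\to 0$. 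If instead $\mu$ is essentially purely atomic, then $\eta=0$ forces a sequence of distinct atoms $A_n$ with $\mu(A_n)\to 0$; since distinct atoms are pairwise disjoint modulo null sets, one obtains the desired sequence by setting $F_n:=A_n$ (possibly after a null-set modification).

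For part (b), the strategy is a finite halting argument on a binary splitting process. Given $E$ with $0<\mu(E)<\infty$: if $E$ is already an atom we stop; otherwise write $E=E_1\sqcup E_2$ with both $\mu(E_i)>0$ and recurse on each piece. The quantitative input is that every produced piece has measure at least $\eta>0$, so any pairwise disjoint family of such pieces contains at most $\mu(E)/\eta$ elements. Hence the splitting tree terminates after finitely many levels, and its leaves form the required finite decomposition of $E$ into disjoint atoms.

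For part (c), given a real-valued measurable $f$ and an atom $A$, I would examine the non-decreasing function $c\mapsto \mu(\{x\in A:f(x)\le c\})$. Because $A$ is an atom, this function takes only the values $0$ and $\mu(A)$, so it jumps at a unique threshold $c_0$. Using right-continuity of the distribution function one concludes that $\{x\in A:f(x)<c_0\}$ and $\{x\in A:f(x)>c_0\}$ both have measure zero, giving $f=c_0$ almost everywhere on $A$; for complex- or vector-valued $f$ one applies the same argument componentwise. I expect the main obstacle to lie in part (a), specifically in the purely atomic branch: one must carefully exclude degenerate situations such as finitely many atoms or an atomic mass uniformly bounded below in order to actually extract from $\eta=0$ an infinite sequence of distinct atoms with vanishing measure, and the preliminary reduction to finite-measure atoms under $\sigma$-finiteness is essential for this argument.
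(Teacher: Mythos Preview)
The paper does not prove this lemma; it is cited from \cite{askari2001generalized} and used as a black box (chiefly in the proof of Lemma~\ref{bessel_bound}), so there is no in-paper argument to compare against.

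Your outline is correct on its own terms. In (a), both branches are sound, and your preliminary observation that atoms in a $\sigma$-finite space carry finite measure is exactly what makes the purely atomic branch go through; the paper's standing $\sigma$-finiteness assumption legitimises this. In (b), the bound $\mu(E)/\eta$ on the size of any disjoint family of pieces of $E$ forces the binary splitting process to terminate, and its leaves are atoms by construction. In (c), the distribution-function jump argument is the standard one. One small presentational point for (a): in the non-atomic branch, say explicitly that the disjoint $F_n$ are the successive differences $E_{n-1}\setminus E_n$ of the nested halved sets (each then has positive measure since $\mu(E_n)\le \mu(E_{n-1})/2$), since the phrase ``iterating produces disjoint $F_n$ with $\mu(F_n)\le 2^{-n}\mu(E)$'' leaves the passage from a nested sequence to a disjoint one implicit.
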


In the setting of a measure space \((X, \mu)\), we introduce the notation \(\eta\) for the purpose of clarity and ease of reference. Specifically, \(\eta\) is defined as the infimum of all non-zero finite measure values of measurable subsets of \(X\). When considering the Lebesgue measure on \( \mathbb{R} \), for any positive \( \epsilon > 0 \), we can construct an interval \( (a, a + \epsilon) \) in \( \mathbb{R} \) whose measure is \( \epsilon \), it follows that for any small \( \epsilon \), there exists a subset of \( \mathbb{R} \) with measure \( \epsilon \). So in this case $\eta = 0$. 

Suppose that $\mu$ is a Radon measure on $X$ such that $\mu(\{x\})=0$ for all $x \in X$, and $A \in \mathcal{B}_X$, the Borel subsets of locally compact Hausdorff space X, satisfies $0<\mu(A)<\infty$. Then for any $\alpha$ such that $0<\alpha<\mu(A)$ there is a Borel set $B \subset A$ such that $\mu(B)=\alpha$. So in this case $\eta = 0$. 

When considering the Haar measure of a locally compact group, for non-discrete group $\eta = 0$. Also for discrete groups, where each singleton set is measurable and typically has a uniform measure (e.g., each point in $\mathbb{Z}$ having measure 1 under counting measure), $\eta$ simply equals the measure of a singleton, assuming the measure is uniformly defined across singletons.

\subsection{Norm boundedness of continuous frame}

The property of norm boundedness plays a fundamental role in ensuring the stability and well-defined nature of operations involving \( F \), where \( F \) is a continuous frame. For example, when \( F \) is employed to reconstruct elements of \( H \) from measurements, norm boundedness prevents the outputs from becoming unbounded, which is essential for the practical reliability and applicability of any reconstruction procedure. It is known that discrete frames are norm bounded, while continuous frames are not so in general. For instance consider the function $\omega: \mathbb{R} \rightarrow \mathbb{R}$ defined by
$$
\omega(x) = \begin{cases} 
      \frac{1}{\sqrt[6]{|x|}}, & \text{if } 0<|x|<1 \\
      \frac{1}{|x|}, & \text{if } |x| \geq 1 \\
      0, & \text{if } x=0 .
   \end{cases}
$$
Let $ H $ be a Hilbert space and $ h$ be a fixed non-zero vector in $H$. Define $ F: \mathbb{R} \rightarrow H $ by $ F(x) = \omega(x)h $. This mapping is weakly (Lebesgue) measurable and a continuous Bessel mapping. However, $\|F(x)\|$ is unbounded. See \cite{rahimi2017construction}. Note that in this example $\eta = 0$.

\begin{lemma}\label{bessel_bound}
Let $H$ be a Hilbert space and $(X,\mu)$ be a $\sigma$-finite positive measure space, such that 
\begin{equation}\label{eta_formula}
0 < \eta=\inf \{\mu(E): \; 0<\mu(E)<\infty\}.
\end{equation}
Then every Bessel mapping $F: X \rightarrow H$ with respect to $(X, \mu)$ is norm bounded.
\end{lemma}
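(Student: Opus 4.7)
The plan is to exploit the hypothesis $\eta > 0$ to reduce the problem to an atomic setting, where a Bessel mapping must be essentially locally constant, and then to test the Bessel inequality against that constant value to extract a bound.

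First I would combine the $\sigma$-finiteness of $(X,\mu)$ with part (b) of Lemma \ref{atom} to write $X = \bigcup_{n} X_n$ with $\mu(X_n) < \infty$, and then decompose each $X_n$ into a finite disjoint union of atoms. The upshot is that, up to a null set, $X$ is a countable disjoint union of atoms $\{A_i\}$, each with $\mu(A_i) \ge \eta$. Thus it suffices to prove the norm bound on a single atom $A$.

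Next I would fix an atom $A$ and show that $F$ is $\mu$-almost constant on $A$. By part (c) of Lemma \ref{atom}, for every $f \in H$ the scalar function $x \mapsto \langle f, F(x)\rangle$ is $\mu$-almost constant on $A$. Choosing a countable dense subset $\{f_n\} \subset H$ (possible since $H$ is separable), each $\langle f_n, F(\cdot)\rangle$ is constant off some $\mu$-null set $N_n \subset A$; setting $N = \bigcup_n N_n$, all of $\langle f_n, F(\cdot)\rangle$ are simultaneously constant on $A \setminus N$. A density/continuity argument then upgrades this to: there exists $v_A \in H$ with $\langle f, F(x)\rangle = \langle f, v_A\rangle$ for every $f \in H$ and every $x \in A \setminus N$, i.e., $F(x) = v_A$ for $\mu$-a.e. $x \in A$.

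With this in hand, I would test the Bessel inequality against $f = v_A$. On the atom,
\begin{equation*}
\int_A |\langle v_A, F(x)\rangle|^2 \, d\mu(x) = \mu(A)\, \|v_A\|^4,
\end{equation*}
and by monotonicity this is at most $\int_X |\langle v_A, F(x)\rangle|^2 \, d\mu(x) \le B\|v_A\|^2$. If $v_A \ne 0$, dividing gives $\|v_A\|^2 \le B/\mu(A) \le B/\eta$. Summing over the countable atom decomposition, we conclude $\|F(x)\| \le \sqrt{B/\eta}$ for $\mu$-a.e.\ $x \in X$.

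The main obstacle I anticipate is the step of passing from weak almost-constancy (one null set per test functional) to genuine $H$-valued almost-constancy on the atom. This is where separability of $H$ is essential; without it one cannot pool the uncountable family of null sets into a single one, and the scalar-valued Lemma \ref{atom}(c) would not upgrade to a vector-valued statement. The remaining steps (the atom decomposition and the Bessel test-vector calculation) are routine once this reduction is in place.
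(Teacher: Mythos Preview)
Your proof is correct and takes essentially the same route as the paper: decompose $X$ into atoms via Lemma \ref{atom}(b), observe that $F$ is a.e.\ constant on each atom, and test the Bessel inequality on a single atom to obtain the uniform bound $\sqrt{B/\eta}$. You are in fact more careful than the paper at the one delicate point---the paper invokes Lemma \ref{atom}(c) directly on the $H$-valued map $F$, while you correctly note that (c) is stated for scalar functions and that separability of $H$ is needed to pool the null sets and upgrade to vector-valued almost-constancy; your choice of the specific test vector $f = v_A$ versus the paper's arbitrary $f$ is an immaterial variation leading to the same bound.
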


\begin{proof}
Since $F$ is a Bessel mapping with respect to $(X, \mu)$, there exists a constant $B > 0$ such that for all $f \in H$, the upper inequality of (\ref{cts_frame}) holds. Let $ X = \cup_{i = 1}^{\infty} X_i$ such that $\mu(X_i) < \infty$. Now, by Lemma \ref{atom} (b), there exists a finite family of disjoint atoms $\{E_{ij}\}_{i=1}^n$ so that $X_j = \cup_{i=1}^n E_{ij}$. Let us consider atom $E_{i_k} \subseteq X$. We know that $\mu(E_{i_k}) > 0$. Choose any $f \in H$ and consider the integral over the single atom $E_{i_k}$. By Lemma \ref{atom} the function $F$ is almost everywhere constant on $E_{i_k}$, so for any $x_0 \in E_{i_k}$
\begin{align*}
|\langle f, F(x_0) \rangle|^2 \mu(E_{i_k}) = \int_{E_{i_k}} |\langle f, F(x) \rangle|^2 d\mu(x) 
\leq B \|f\|^2.
\end{align*}
The last inequality is obtained by the Bessel property of $F$. Because $\mu(E_{i_k}) \geq \eta > 0$, we can rearrange this inequality to obtain a bound for $\langle f, F(x_0) \rangle$ as follows 
$$
|\langle f, F(x_0) \rangle|^2 \leq \frac{B}{\mu(E_{i_k})} \|f\|^2 \leq \frac{B}{\eta} \|f\|^2.
$$
Inequality holds true independent of the choice of $i$ and $k$, where $x_0 \in E_{ik}$ and $f \in H$.  Therefore, $\|F(x)\|$ is bounded above by $\sqrt{B/\eta}$, and this upper bound is valid for all $x \in X$. Hence, $F$ is norm-bounded above.
\end{proof}

Given the conditions in Lemma \ref{bessel_bound}, any continuous frame $F: X \rightarrow H$ with respect to the measure space $(X, \mu)$ is also norm bounded. Continuous Bessel mappings can be either bounded or unbounded, depending on the properties of the measure space. In the following, we present some examples that show this concept. The examples provided do not only show the application of Lemma \ref{bessel_bound}, but also highlight its limitations and the conditions under which it applies.

\begin{example}
Consider the following two examples with  $X=\mathbb{R}:$
\begin{itemize}
\item[1-] Let $H=\ell^2(\mathbb{Z})$, let $\left\{e_k\right\}_{k \in \mathbb{Z}}$ be its standard orthonormal basis, and equip $X=\mathbb{R}$ with a purely atomic measure, whose atoms are the intervals $[n, n+1), n \in \mathbb{Z}$, each with measure 1. Define $\left\{F(x) \mid x \in \mathbb{R} \right\} \subset H$ by $F(x) =e_{\lfloor x\rfloor}, x \in \mathbb{R}$.
For any \(x \in \ell^2(\mathbb{Z})\),
    \[
    \|x\|^2 = \sum_{k \in \mathbb{Z}} |x_k|^2 = \sum_{k \in \mathbb{Z}} \int_{[k, k+1)} |\langle x, e_k \rangle|^2 \, d\mu = \int_{\mathbb{R}} |\langle x, F(x) \rangle|^2 \, d\mu.
    \]
So the family $\left\{F(x) \mid x \in \mathbb{R}\right\}$ is a Parseval frame.

\item[2-] Let $H=L^2([0,1])$, and let $X=\mathbb{R}$. Fix $a \in \mathbb{R}$ and define $\mu_X=\sum_{n \in \mathbb{Z}} \delta_{n+a}$, where $\delta_x$ denotes the Dirac measure at $x \in \mathbb{R}$. Define $\left\{F(x) \mid x \in \mathbb{R}\right\} \subset H$ by $F(x)y= e^{2 \pi i x y}$. 
In this case, the smallest measurable subsets that have a non-zero measure are the singletons containing each point \( n + a \), and each such singleton has a measure of 1 due to the Dirac delta measure at that point. Also, there are no smaller subsets with a positive measure. Therefore, \( \eta = 1 \). This example illustrates how the choice of measure on a space \( X \) acts on properties like \( \eta \), especially in cases where the measure is concentrated at specific points.
For each $x \in \mathbb{R}$, define a function $F(x)$ by $F(x)y = e^{2 \pi i x y}$. This function maps $y \in [0,1]$ to the complex exponential $e^{2 \pi i x y}$. Each $F(x)y$ is an element of $L^2([0,1])$. Functions of the form $e^{2 \pi i n y}$ for integer $n$ are known to form an orthonormal basis for $L^2([0,1])$. Given $\mu_X = \sum_{n \in \mathbb{Z}} \delta_{n+a}$, we rewrite the integral as a sum,
$$
\int_{\mathbb{R}} |\langle f, F(x) \rangle|^2 \, d\mu_X = \sum_{n \in \mathbb{Z}} |\langle f, F(n+a) \rangle|^2 =\|f\|^2 .
$$
So the family $\left\{F(x) \mid x \in \mathbb{R}\right\}$ is a Parseval frame for $L^2([0,1])$.

\end{itemize}

\end{example}

\section{Phase Retrieval}

Two essential properties in the study of frames are completeness and the ability to perform phase retrieval. These properties describe the capacity of a frame to represent all elements of a Hilbert space and to reconstruct a signal from the frame elements and the magnitudes of its frame coefficients.
\begin{definition}
A continuous frame $F: X \rightarrow H$ does phase retrieval (norm retrieval) if all $f, g \in H$ which satisfy
$$
|\langle f, F(x) \rangle| = |\langle g, F(x) \rangle| \quad \text{for $\mu$-almost all } x \in X,
$$
implies that $f = e^{i\theta}g$ for some $\theta \in \mathbb{R}$ ($\|f\| = \|g\|$).
\end{definition}
Consider the nonlinear mapping $\mathcal{A}_{F}: H \rightarrow L^2(X)$, given by
\begin{equation}\label{A_f_non_linera_map}
(\mathcal{A}_{F}(f))(x) =\left|\left\langle f, F(x)\right\rangle\right| .
\end{equation}
It is said that $F$ does phase retrieval if and only if $\mathcal{A}_{F}$ is injective on $H / \sim$, containing the equivalent classes of $H$ so that $f \sim g$ if $f=\alpha g$ for some scalar $\alpha$ with $|\alpha|=1$.

In the other form a continuous frame does phase retrieval if the magnitudes of the frame coefficients determine an element in the Hilbert space up to a unimodular constant. This property is crucial in applications where phase information may be lost or inaccessible. Clearly, any continuous frame which yields phase retrieval necessarily yields norm retrieval, but the converse does not hold in general.

It is known that $ \mathcal{A}_{F}$ does not take lower Lipschitz bound. However, it takes a finite upper Lipschitz bound. Indeed let $H$ be a Hilbert space and let $(X, \mu)$ be a positive measure space. Let $F: X \rightarrow H$ be a continuous frame for $H$, with frame bounds $0<A \leq B<\infty$. Then for every $f, g \in H$ we have
$$
\left\|\mathcal{A}_{F}(f)-\mathcal{A}_{F}(g)\right\| \leq B^{1 / 2} \inf _{|\alpha|=1}\|f-\alpha g\| .
$$
First note that
$$
||\left\langle f, F(x) \right\rangle|-|\left\langle g, F(x) \right\rangle \| \leq\left|\left\langle f, F(x)\right\rangle-\left\langle g, F(x)\right\rangle\right|
$$
by the reverse triangle inequality. This means that
$$
\left\|\mathcal{A}_{F}(f)-\mathcal{A}_{F}(g)\right\| \leq\left\|T_{F} f-T_{F} g\right\| \leq B^{1 / 2}\|f-g\|,
$$
where $T_{F}$ is the analysis operator of $F$.
Since $\mathcal{A}_{F}(\alpha g)=\mathcal{A}_F(g)$ for any unimodular scalar $\alpha$, we obtain
$$
\left\|\mathcal{A}_{F}(f)-\mathcal{A}_{F}(g)\right\|=\inf _{|\alpha|=1}\left\|\mathcal{A}_{F}(f)-\mathcal{A}_{F}(\alpha g)\right\| \leq B^{1 / 2} \inf _{|\alpha|=1}\|f-\alpha g\| .
$$

The following definition presents an analogical version of the $\mu$-complement property in the discrete case \cite{balan2006signal}. 

\begin{definition}
We say a continuous frame $F: X \rightarrow H$  has the $\mu$-complement property if for every measurable subset $S \subseteq X$ we have
$$
\operatorname{\overline{span}}\left\{F(x)  \mid  x \in S\right\}=H \quad \text { or } \quad \operatorname{\overline{span}}\left\{F(x)  \mid x \notin S\right\}= H.
$$

\end{definition}

Suppose \( F \) is a continuous frame for a Hilbert space \( H \) with respect to the measure space \( (X, \mu) \) and with bounds \( A \) and \( B \). If \( U: H \rightarrow K \) is a bounded, surjective operator where \( K \) is another Hilbert space, then \( UF \) constitutes a continuous frame for \( K \) with respect to \( (X, \mu) \). The frame bounds for \( UF \) are given by \( A\|U^\dagger\|^{-2} \) and \( B\|U\|^2 \), where \( U^\dagger \) is the pseudo-inverse of \( U \). See Corollary 2.14 \cite{rahimi2006continuous}. Moreover for a continuous frame $F$ of $H$, it is shown that $UF$ is a continuous frame for $K$ if and only if $U$ is surjective. See Theorem 2.6. \cite{faroughi2012c}.

Let us now define a specific type of continuous frame known as a continuous near-Riesz basis.

\begin{definition}\label{near_def}
A continuous frame $F: X \rightarrow H$ for a Hilbert space $H$ is called a continuous near-Riesz basis if there exists a finite positive measurable subset $X_1 \subset X$ and $0 < \mu(X_1) < \infty$, such that the set $\{F(x) \mid x\in X \backslash X_1 \}$ forms a Riesz basis. And, if $\mu(X) = \infty$, then for all $Y \subset X$ such that $\mu(Y) < \infty$, deduces that $F |_Y $ is not $\mu-$complete.
\end{definition}

Based on the above definition, we can say that every continuous Riesz basis is naturally a continuous near-Riesz basis. Also it is worthwhile to note that continuous near-Riesz basis is preserved by the topological isomorphism. In fact we have:

\begin{proposition}
Let \((X,\mu)\) be a measure space, and \( F: X \rightarrow H \) be a continuous near-Riesz basis for a Hilbert space \(H\), and \( U: H \rightarrow H \) be an  invertible operator, then $UF$, is also a continuous near-Riesz basis.
\end{proposition}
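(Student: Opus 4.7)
The plan is to verify each clause of Definition \ref{near_def} for $UF$, reusing the same distinguished subset $X_1$ that witnesses the property for $F$. First, since $U$ is invertible it is in particular bounded and surjective, so by Theorem 2.6 of \cite{faroughi2012c} (already quoted in the excerpt), $UF$ is a continuous frame for $H$. This disposes of the framing hypothesis; it remains to check (a) the Riesz-basis clause on $X \setminus X_1$, and (b) the non-completeness clause on every finite-measure subset when $\mu(X)=\infty$.

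For (a), fix the $X_1$ from the near-Riesz property of $F$ and set $Y = X \setminus X_1$. The key technical point is that for any $\phi \in L^2(X)$ supported on a measurable $X_2 \subset Y$ with $\mu(X_2) < \infty$, the weak integral intertwines with the bounded operator $U$: for every $g \in H$,
\begin{align*}
\Bigl\langle U \!\!\int_{X_2}\!\! \phi(x) F(x)\, d\mu(x),\, g \Bigr\rangle
= \int_{X_2}\! \phi(x)\, \langle F(x), U^{*}g\rangle\, d\mu(x)
= \int_{X_2}\! \phi(x)\, \langle UF(x), g\rangle\, d\mu(x),
\end{align*}
so $\int_{X_2} \phi(x)\, UF(x)\, d\mu(x) = U\!\int_{X_2} \phi(x) F(x)\, d\mu(x)$ in the weak sense. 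Using $\|U^{-1}\|^{-1}\|v\| \le \|Uv\| \le \|U\|\|v\|$ and the Riesz-basis bounds $A,B$ of $F|_Y$, one immediately gets Riesz-basis bounds $A\|U^{-1}\|^{-1}$ and $B\|U\|$ for $\{UF(x)\}_{x \in Y}$. For $\mu$-completeness of $UF|_Y$, the same intertwining identity shows $\operatorname{cspan}\{UF(x) : x \in Y\} = U\bigl(\operatorname{cspan}\{F(x) : x \in Y\}\bigr)$; taking closures and using that $U$ is an invertible, hence homeomorphic, operator on $H$, this equals $U(\overline{\operatorname{cspan}\{F(x):x\in Y\}}) = U(H) = H$.

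For (b), assume $\mu(X) = \infty$ and suppose, for contradiction, that $UF|_Y$ is $\mu$-complete for some $Y \subset X$ with $\mu(Y) < \infty$. By the same intertwining-plus-closure argument as above, $H = \overline{\operatorname{cspan}\{UF(x):x\in Y\}} = U\bigl(\overline{\operatorname{cspan}\{F(x):x\in Y\}}\bigr)$, and applying $U^{-1}$ gives $\overline{\operatorname{cspan}\{F(x):x\in Y\}} = H$, contradicting the non-completeness clause of the near-Riesz property of $F$.

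The only genuinely delicate step is justifying the interchange of $U$ with the weakly-defined integrals; the rest is a straightforward bookkeeping of norms. Once that intertwining is recorded, both the Riesz-basis inequalities and the completeness/non-completeness statements transfer along $U$ by direct substitution, and no further use of the structure of $(X,\mu)$ is needed.
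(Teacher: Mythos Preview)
Your proof is correct and follows essentially the same route as the paper's: reuse the witness set $X_1$, transfer the Riesz-basis property along $U$, and transfer non-completeness on finite-measure pieces. You supply more detail than the paper (the intertwining of $U$ with weak integrals and explicit bounds $A\|U^{-1}\|^{-1}$, $B\|U\|$, which the paper omits), and for clause~(b) you argue via $\operatorname{cspan}$ and closures whereas the paper uses the orthogonality characterization of $\mu$-completeness (Proposition~\ref{Pro_1_2}) to produce the witness $(U^{-1})^{*}f$; these are equivalent formulations of the same step.
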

\begin{proof}
First we note that, obviously $UF$ is weakly measurable. Moreover, since $F$ is a continuous near-Riesz basis so there exists a positive measurable subset $X_1 \subset X$, where $0 < \mu(X_1) < \infty$, such that the set $\{F(x) \mid x\in X \backslash X_1 \}$ forms a Riesz basis. So $\{U(F(x)) \mid x\in X \backslash X_1 \}$ forms a Riesz basis as well. Moreover, if $\mu(X) = \infty$, then for all $Y \subset X$ such that $\mu(Y) < \infty$, deduces that $F |_Y $ is not $\mu-$complete, so there exist $0 \neq f \in H$ such that $\langle f, F(x)\rangle = 0$ for almost all $x \in Y$. Since $U$ is invertible we have
$$
\langle (U^{-1})^* f, UF(x)\rangle = \langle f, F(x)\rangle = 0,
$$
for almost all $x \in Y$. Hence $UF \mid_Y$ is not $\mu$-complete. This completes the proof.
\end{proof}

The following proposition provides an equivalent condition for the classifying a Bessel mapping $F$ to be $\mu$-complete. 
\begin{proposition}\label{Pro_1_2}\cite{arefijamaal2013new}
Let $F \in L^2(X, H)$ be a Bessel mapping. The following are equivalent:
\begin{itemize}
\item[(a)] $F$ is $\mu$-complete.
\item[(b)] If $f \in H$ so that $\langle f, F(x)\rangle = 0$ for almost all $x \in X$, then $f = 0$. See \cite{arefijamaal2013new}.
\end{itemize}
\end{proposition}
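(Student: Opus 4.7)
The plan is to prove the two implications separately, using the weak definition of the integral $\int_X \phi(x) F(x)\,d\mu(x)$ as the element of $H$ characterized by $\langle \int_X \phi(x)F(x)\,d\mu(x), g\rangle = \int_X \phi(x)\langle F(x), g\rangle\,d\mu(x)$ for every $g \in H$. Note that since $F$ is Bessel, the scalar function $x \mapsto \langle f, F(x)\rangle$ lies in $L^2(X,\mu)$ for every $f \in H$, which is the only analytic fact needed beyond basic Hilbert space theory.

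For (a) $\Rightarrow$ (b), assume $F$ is $\mu$-complete and suppose $f \in H$ satisfies $\langle f, F(x)\rangle = 0$ for $\mu$-a.e. $x \in X$. I would compute, for an arbitrary $\phi \in L^2(X)$,
\begin{align*}
\left\langle f, \int_X \phi(x) F(x)\,d\mu(x)\right\rangle
= \overline{\int_X \phi(x)\langle F(x), f\rangle\,d\mu(x)}
= \int_X \overline{\phi(x)}\,\langle f, F(x)\rangle\,d\mu(x) = 0,
\end{align*}
so $f$ is orthogonal to every element of $\operatorname{cspan}\{F(x) : x \in X\}$. Since this set is dense in $H$ by hypothesis, $f = 0$.

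For the converse (b) $\Rightarrow$ (a), I would argue by contrapositive. Suppose $V := \overline{\operatorname{cspan}\{F(x) : x \in X\}} \ne H$, and pick a nonzero $f \in V^\perp$. Then for every $\phi \in L^2(X)$,
\begin{align*}
0 = \left\langle \int_X \phi(x) F(x)\,d\mu(x), f\right\rangle = \int_X \phi(x)\,\langle F(x), f\rangle\,d\mu(x).
\end{align*}
Because $x \mapsto \langle F(x), f\rangle$ belongs to $L^2(X,\mu)$ by the Bessel property, I may choose $\phi(x) := \overline{\langle F(x), f\rangle}$ as a legitimate test function, which forces $\int_X |\langle f, F(x)\rangle|^2\,d\mu(x) = 0$, hence $\langle f, F(x)\rangle = 0$ $\mu$-a.e. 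By (b) this gives $f = 0$, a contradiction.

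The argument is essentially routine; the only delicate step is the legitimacy of swapping the inner product with the weak integral and, in the second implication, verifying that the specific test function $\phi = \overline{\langle F(\cdot), f\rangle}$ is genuinely in $L^2(X,\mu)$, both of which are handled by the Bessel assumption. I anticipate no substantive obstacle beyond bookkeeping of complex conjugates.
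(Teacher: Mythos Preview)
Your proof is correct and is the standard argument for this equivalence. The paper does not supply its own proof of this proposition --- it is quoted from \cite{arefijamaal2013new} without argument --- so there is nothing to compare against; your write-up would serve as a self-contained justification, and the two points you flag (interchanging the inner product with the weak integral, and the membership of $\overline{\langle F(\cdot),f\rangle}$ in $L^2(X,\mu)$ via the Bessel bound) are exactly the places where the Bessel hypothesis is used.
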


The following theorem establishes the equivalence of several mathematical conditions that characterize when a continuous frame in a real Hilbert space can perform phase retrieval. The proof of this theorem is based on Lemma \ref{bessel_bound}, which asserts that every Bessel mapping in a Hilbert space, defined on a \(\sigma\)-finite positive measure space, is norm bounded.

\begin{theorem}\label{phase_equvalent_cond}
Let \( F : X \to H \) be a continuous frame for a real Hilbert space \( H \) with respect to a measure space \( (X, \mu) \). Then the following are equivalent:
\begin{itemize}
\item[(a)] $F$ has $\mu$-complete property.
\item[(b)] $F$ is phase retrieval.
\item[(c)]For any two non-zero vectors \( f, g \in H \), we have:
\[
\int_X \left|\langle f, F(x) \rangle\right|^2 \left|\langle g, F(x) \rangle\right|^2 d\mu(x) > 0.
\]
\item[(d)] There is a positive real constant $\alpha > 0$ so that for all $f, g \in H$,
$$
\begin{aligned}
\int_{X}\left|\left\langle f, F(x)\right\rangle\right|^2\left|\left\langle g, F(x)\right\rangle\right|^2 d\mu(x) \geq \alpha\|f\|^2\|g\|^2
\end{aligned}.
$$
\item[(e)] There is a positive real constant $\alpha >0$ so that for all $f \in H$,
\begin{align}\label{2_6 in theorem}
R(f):=\int_{X} \left|\left\langle f, F(x)\right\rangle\right|^2\left\langle\cdot, F(x)\right\rangle F(x) d\mu(x) \geq \alpha I,
\end{align}
where the inequality is in the sense of quadratic forms.

\end{itemize}
\end{theorem}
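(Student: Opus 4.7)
The plan is to close the cycle $(a)\Rightarrow(b)\Rightarrow(c)\Rightarrow(e)\Rightarrow(d)\Rightarrow(a)$, using a different tool on each arrow. I read condition (a) as the $\mu$-complement property defined just before the theorem, since mere $\mu$-completeness is automatic for any frame via Proposition \ref{Pro_1_2}. Throughout, the fact that $H$ is \emph{real} is essential, supplying the polarization identity
\[
|\langle f+g, F(x)\rangle|^{2} - |\langle f-g, F(x)\rangle|^{2} = 4\,\langle f, F(x)\rangle\langle g, F(x)\rangle,
\]
which is the algebraic workhorse of the whole argument.

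For $(a)\Leftrightarrow(b)$, I would assume $|\langle f, F(x)\rangle|=|\langle g, F(x)\rangle|$ $\mu$-a.e.\ and split $X=S\sqcup S^c$ with $S=\{x:\langle f-g, F(x)\rangle=0\}$; then automatically $\langle f+g, F(x)\rangle=0$ on $S^c$. The $\mu$-complement property together with Proposition \ref{Pro_1_2} then forces $f=\pm g$. For the reverse direction, I would take a set $S$ on which the complement property fails, produce nonzero vectors $h_1\perp\{F(x):x\notin S\}$ and $h_2\perp\{F(x):x\in S\}$, and observe that $h_1+h_2$ and $h_1-h_2$ witness a failure of phase retrieval. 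The equivalence $(b)\Leftrightarrow(c)$ is then a direct consequence of the polarization identity: the integral in (c) vanishes iff $\langle f, F(x)\rangle\langle g, F(x)\rangle=0$ a.e., iff $|\langle u, F(x)\rangle|=|\langle v, F(x)\rangle|$ a.e.\ for $u=f+g$, $v=f-g$, iff phase retrieval fails on the pair $(u,v)$.

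For $(d)\Leftrightarrow(e)$, one only needs to recognize that $R(f)$ is the positive self-adjoint operator whose quadratic form is exactly $\langle R(f)g, g\rangle=\int_X|\langle f, F(x)\rangle|^{2}|\langle g, F(x)\rangle|^{2}\,d\mu(x)$, so the two statements encode the same operator inequality (with the natural scaling $\alpha\|f\|^{2}I$ in (e)). The crux is $(c)\Rightarrow(d)$: upgrading a pointwise-positive bilinear integral to a uniform lower bound. I would normalize to $\|f\|=\|g\|=1$ and exploit continuity of $(f,g)\mapsto\int_X|\langle f, F(x)\rangle|^{2}|\langle g, F(x)\rangle|^{2}\,d\mu$, with dominated convergence justified by the Bessel bound $B$ and the pointwise norm bound $\|F(x)\|\le\sqrt{B/\eta}$ from Lemma \ref{bessel_bound}. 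This is where the main obstacle sits: in finite-dimensional $H$ the unit sphere is compact and the infimum is attained positively, but in a genuinely infinite-dimensional setting a minimizing sequence can drift weakly to zero, so a weak-compactness argument combined with Lemma \ref{bessel_bound} will be needed to certify that $\alpha>0$ rather than $\alpha=0$.
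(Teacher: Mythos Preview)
Your treatment of $(a)\Leftrightarrow(b)$ and $(b)\Leftrightarrow(c)$ is correct and essentially coincides with the paper's: the paper proves $(a)\Leftrightarrow(b)$ by the same partition argument you describe, and proves $(b)\Rightarrow(c)$ by passing through $(a)$ rather than via the polarization identity, but the substance is the same. Your remark that $(d)$ and $(e)$ encode the same operator inequality (with the scaling $\alpha\|f\|^{2}I$ implicit in $(e)$) is also correct; the paper does not spell this out.

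You have correctly isolated the real difficulty at $(c)\Rightarrow(d)$. The paper does not prove this implication either: it simply writes that ``$(c)\Rightarrow(d)\Rightarrow(e)\Rightarrow(b)$ can be shown by an analogous approach to \cite{balan2012reconstruction}''. That reference works in finite-dimensional $H$, where your compactness-of-the-unit-sphere argument goes through verbatim and yields $\alpha>0$. Your proposed extension to infinite dimensions, however, does not work as stated. First, Lemma~\ref{bessel_bound} requires the hypothesis $\eta>0$, which is not assumed in the theorem, so the pointwise bound $\|F(x)\|\le\sqrt{B/\eta}$ is unavailable in general; even when it is available, the dominating constant $(B/\eta)^{2}$ is integrable only if $\mu(X)<\infty$. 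Second, and more fundamentally, a weak-compactness argument cannot rescue the infimum: the functional $(f,g)\mapsto\int_X|\langle f,F(x)\rangle|^{2}|\langle g,F(x)\rangle|^{2}\,d\mu$ is not weakly lower semicontinuous in any way that prevents it from vanishing along a weakly null sequence. Indeed, it is known that phase retrieval is never uniformly stable in infinite-dimensional Hilbert spaces, so $(b)\Rightarrow(d)$ fails in general when $\dim H=\infty$. Thus the full five-way equivalence really requires $\dim H<\infty$ (or some comparable compactness hypothesis), and neither your sketch nor the paper's deferral to \cite{balan2012reconstruction} closes the gap otherwise.
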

\begin{proof}
$(a) \Rightarrow (b)$ 
Suppose $F$ does not do phase retrieval, so we can find some nonzero vectors $f, g \in H$ so that $|\langle f, F(x) \rangle|=|\langle g, F(x) \rangle|$ for $\mu$ almost every where $x \in X$, but $f \neq \pm g$. Since $H$ is a real Hilbert space this means that $\langle f, F(x) \rangle= \pm\langle g , F(x) \rangle$ for $\mu$ almost every where. Thus, let $S=\{x \in X :\langle f , F(x) \rangle=\langle g , F(x) \rangle\}$. Then $S$ is a measurable subset of $X$, moreover $f - g \neq 0$ but $\langle f - g, F(x) \rangle=0$, for $x \in S$. Thus $\operatorname{\overline{cspan}}\{F(x) \mid x \in S \} \neq H$, and similarly $f + g \neq 0$ but $\langle f + g , F(x) \rangle=0$ for every $x \notin S$, so $\operatorname{\overline{cspan}}\{ F(x) \mid x \notin S \} \neq H$, which means that $F$ does not have the $\mu$-complement property.\\
$(b) \Rightarrow (a)$
Assume that $F: X \rightarrow H$ is a continuous frame which does phase retrieval. We will show that $F$ has the $\mu$-complement property. Suppose by contradiction that $F$ does not have the $\mu$-complement property. This means there exists a measurable subset $S \subseteq X$ such that neither $\operatorname{\overline{cspan}}\{F(x)  \mid  x \in S\}$ nor $\operatorname{\overline{cspan}}\{F(x)  \mid  x \notin S\}$ is equal to $H$. By Proposition \ref{Pro_1_2}, we can deduce that there exist some nonzero vectors $f,g \in H$ such that $\langle f, F(x)\rangle = 0$ for almost all $x \in S$ and $\langle g, F(x)\rangle = 0$ for almost all $x \notin S$. Since $F$ is a frame we know that $f \neq \lambda g$ for any scalar $\lambda$, so in particular, $f+g \neq 0$ and $f-g \neq 0$. It now follows that $|\langle f+g, F(x)\rangle|=|\langle f-g, F(x)\rangle|$ for all $x \in X$ but $f+g \neq \lambda(f-g)$ for any scalar $\lambda$, that is a contradiction.\\
$(b) \Rightarrow (c)$ 
Suppose by contradiction that there exist non-zero vectors \( f, g \in H \) such that:
\[
\int_X \left|\langle f, F(x) \rangle\right|^2 \left|\langle g, F(x) \rangle\right|^2 d\mu(x) = 0.
\]
This implies that for almost every \( x \in X \), \( \langle f, F(x) \rangle \langle g, F(x) \rangle = 0 \). Define:
\[
X_f = \{ x \in X : \langle f, F(x) \rangle = 0 \} \quad \text{and} \quad X^c_f = X \setminus X_f.
\]
Since \( \langle f, F(x) \rangle = 0 \) for all \( x \in X_f \), the set \( F(X_f) \) of frame elements indexed by \( X_f \) cannot span \( H \). Similarly, \( g \) must be orthogonal to all elements of \( F(X^c_f) \). Therefore, \( F(X^c_f) \) does not span \( H \), as well. By  equivalence of $(a)$ and $(b)$, this contradicts the assumption of phase retrieval.\\
$(c) \Rightarrow (d) \Rightarrow(e) \Rightarrow(b)$  can be shown by an analogous approch to  \cite{balan2012reconstruction}.
\end{proof}

It is worth noting that part $(b) \Rightarrow (a)$ also holds true when considering a Hilbert space over complex numbers as well.

\begin{corollary}
Let \( F : X \to H \) be a continuous near-Riesz basis for a Hilbert space \( H \) with respect to a measure space \( (X, \mu) \).  Then \( F \) is not phase retrieval.
\end{corollary}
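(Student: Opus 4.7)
The plan is to invoke the equivalence of parts (a) and (b) in Theorem~\ref{phase_equvalent_cond}, which reduces phase retrieval to the $\mu$-complement property. Hence it suffices to exhibit a measurable $S \subseteq X$ such that neither $\{F(x) : x \in S\}$ nor $\{F(x) : x \in X \setminus S\}$ is $\mu$-complete.

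Let $X_1 \subset X$ be as in Definition~\ref{near_def}, with $0 < \mu(X_1) < \infty$, and set $Y := X \setminus X_1$, so that $\{F(y) : y \in Y\}$ is a continuous Riesz basis; its analysis operator $T := T_{F|_Y} : H \to L^2(Y,\mu)$ is then a topological isomorphism. I would focus on the main case $\mu(X) = \infty$, in which $\mu(Y) = \infty$, and by $\sigma$-finiteness of $\mu$ there exists a measurable $Y' \subset Y$ with $0 < \mu(Y') < \infty$. Define $S := X_1 \cup Y'$, so that $\mu(S) = \mu(X_1) + \mu(Y') < \infty$.

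Both halves of this splitting should then fail $\mu$-completeness. Since $\mu(S) < \infty$ and $\mu(X) = \infty$, the second clause of Definition~\ref{near_def} immediately gives that $F|_S$ is not $\mu$-complete. On the other side, take $\psi := \mathbf{1}_{Y'} \in L^2(Y, \mu)$, which is nonzero because $\mu(Y') > 0$, and set $f := T^{-1}\psi \in H$; then $f \neq 0$ and $\langle f, F(y)\rangle = (Tf)(y) = \psi(y) = 0$ for $\mu$-a.e. $y \in Y \setminus Y'$, so Proposition~\ref{Pro_1_2} yields that $F|_{Y \setminus Y'}$ is not $\mu$-complete either. This violates the $\mu$-complement property, and by Theorem~\ref{phase_equvalent_cond} $F$ cannot be phase retrieval.

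The main subtlety is producing the witness $f = T^{-1}\psi$ for the non-completeness of the Riesz restriction $F|_{Y \setminus Y'}$; this uses crucially that the analysis operator of a continuous Riesz basis is an isomorphism onto $L^2(Y,\mu)$, so that any function supported in $Y'$ pulls back to a nonzero vector in $H$ that is orthogonal to every $F(y)$ with $y \notin Y'$. All remaining steps — checking $\mu(S) < \infty$, applying the second clause of the definition, and citing Proposition~\ref{Pro_1_2} — are routine.
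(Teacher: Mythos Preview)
Your proof is correct and follows the same strategy as the paper's: choose $S = X_1 \cup Y'$ (the paper calls your $Y'$ simply $Y$), use the second clause of Definition~\ref{near_def} to see that $F|_S$ is not $\mu$-complete, observe that $F|_{X\setminus S}$ is a proper restriction of a continuous Riesz basis and hence also not $\mu$-complete, and then apply Theorem~\ref{phase_equvalent_cond}. The paper's proof simply asserts the non-completeness of $F|_{X\setminus S}$ without argument, whereas you supply the explicit witness $f = T^{-1}\mathbf{1}_{Y'}$ via the Riesz isomorphism; both proofs treat only the case $\mu(X)=\infty$, which you flag explicitly and the paper leaves implicit.
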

\begin{proof}
Suppose that \( F \) is a continuous near-Riesz basis, so there exists a subset \( X_1 \subset X \) with \( 0 < \mu(X_1) < \infty \) such that the set \( \{ F(x) \mid x \in X \setminus X_1 \} \) forms a Riesz basis for \( H \). Let $Y \subset X - X_1$ such that $ 0 < \mu(Y) < \infty$ then $F |_{X_1 \cup Y}$ is not $\mu$-complete and also $F|_{X - (X_1 \cup Y) }$ is not $\mu$-complete. So by the Theorem \ref{phase_equvalent_cond} it is not a phase retrieval.

\end{proof}

The stability of phase retrieval property under perturbations of the frame set is given in the next theorem.

\begin{theorem}\label{non1}
Let $H$ be an infinite dimensional Hilbert space and $(X, \mu)$ be a positive $\sigma$-finite measure space  with $\mu(X) = \infty$. Also if $F : X \rightarrow H$ is a phase retrieval continuous frame with bounds $A,B$. And $F$ on finite measurable subset be not $\mu$-complete. Then for any $A > \epsilon > 0$, there exist $G: X \rightarrow H$ such that $G$ is not phase retrieval, and
\begin{equation}\label{Stable_1}
\int_{X}\left\| F(x) - G(x)\right\|^2 d\mu(x)<\epsilon.
\end{equation}

\end{theorem}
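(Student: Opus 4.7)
The plan is to build $G$ that fails the $\mu$-complement property with respect to an explicit splitting $X = S \sqcup S^c$, and then invoke Theorem \ref{phase_equvalent_cond} (together with the remark that $(b)\Rightarrow(a)$ is valid over $\mathbb{C}$) to conclude that $G$ is not phase retrieval. Specifically, I will produce nonzero $h_1, h_2 \in H$ and a finite-measure set $S$ so that $\langle h_1, G(x)\rangle = 0$ on $S$ and $\langle h_2, G(x)\rangle = 0$ on $X \setminus S$; this forces $\overline{\operatorname{span}}\{G(x) : x \in S\} \subseteq h_1^\perp \neq H$ and $\overline{\operatorname{span}}\{G(x) : x \notin S\} \subseteq h_2^\perp \neq H$, violating $\mu$-complement.

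To carry out the construction, fix an arbitrary unit vector $h_2 \in H$. The Bessel estimate gives $\int_X |\langle F(x), h_2\rangle|^2 \, d\mu(x) \leq B$, and $\sigma$-finiteness of $(X,\mu)$ lets me write $X = \bigcup_n X_n$ with $X_n$ increasing and $\mu(X_n) < \infty$; monotone convergence then yields a large $n$ for which $S := X_n$ satisfies $\int_{X\setminus S} |\langle F(x), h_2\rangle|^2 \, d\mu(x) < \epsilon$. The standing hypothesis on $F$ together with Proposition \ref{Pro_1_2} produces a nonzero $h_1 \in H$ with $\langle h_1, F(x)\rangle = 0$ for $\mu$-a.e.\ $x \in S$. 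Define
$$
G(x) := \begin{cases} F(x), & x \in S, \\ F(x) - \langle F(x), h_2\rangle\, h_2, & x \in X \setminus S. \end{cases}
$$
Then $G$ is weakly measurable since $x \mapsto \langle F(x), h_2\rangle$ is measurable, and
$$
\int_X \|F(x) - G(x)\|^2 \, d\mu(x) = \int_{X\setminus S} |\langle F(x), h_2\rangle|^2 \, d\mu(x) < \epsilon.
$$
On $S$ we have $\langle h_1, G(x)\rangle = \langle h_1, F(x)\rangle = 0$, while on $X\setminus S$ a direct computation using $\|h_2\|=1$ and conjugate linearity in the second argument gives $\langle h_2, G(x)\rangle = \langle h_2, F(x)\rangle - \overline{\langle F(x), h_2\rangle} = 0$. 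Thus $G$ violates the $\mu$-complement property with respect to $S$, so $G$ is not phase retrieval.

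The main obstacle is reconciling two competing demands: the perturbation must affect the infinite-measure portion of $X$ (in order to kill $h_2$ against $G$ on $X \setminus S$), yet it must produce an arbitrarily small $L^2$-error. The Bessel property makes this possible, since $\int_X |\langle F(x), h_2\rangle|^2 \, d\mu < \infty$ forces almost all of this mass to concentrate on some finite-measure subset, which is exactly what the choice $S = X_n$ captures. The hypothesis $\epsilon < A$ plays no role in falsifying phase retrieval; its purpose is to guarantee, via a standard Paley--Wiener type estimate, that $G$ remains a continuous frame (with lower bound at least $(\sqrt{A} - \sqrt{\epsilon})^2 > 0$), so that the counterexample does not slip outside the class of frames.
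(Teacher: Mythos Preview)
Your proof is correct and follows essentially the same strategy as the paper: both pick a finite-measure set $S$ on which the Bessel tail of a fixed unit vector is $<\epsilon$, define $G$ by projecting $F$ off that direction on $X\setminus S$, and use the non-$\mu$-completeness hypothesis on $S$ to obtain the second witness. The only cosmetic differences are that the paper takes the projected direction to be a normalized frame vector $F(x_1)/\|F(x_1)\|$ and exhibits explicit $f,g$ with equal magnitudes, whereas you take an arbitrary unit vector and appeal directly to the $\mu$-complement characterization in Theorem~\ref{phase_equvalent_cond}.
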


\begin{proof}
We assume, without loss of generality, $X_i \subseteq X_{i+1}$. Suppose $F: X \rightarrow H$, is a continuous frame with respect to $(X, \mu)$. It is enough to show that, for every given $\epsilon>0$, there exists another frame $G$ that does not yield phase retrieval and satisfies (\ref{Stable_1}).
Given $\epsilon > 0$, it is worth noting that $F$ being a continuous Bessel mapping implies that the mapping $X \rightarrow \mathbb{C}$ defined by $x \mapsto \left\langle f, F(x) \right\rangle$ is measurable for all $f \in H$. Moreover, there exist $B > 0$ so that
$$
\int_{X} \left| \left\langle f, F(x) \right\rangle \right|^2 \mathrm{d}\mu(x) \leq B \|f\|^2,
$$
for all $f \in H$. According to \cite{folland1999real} and without loss of generality, we can deduce there exists $i_0 \in \mathbb{N}$ such that
\begin{equation}\label{label_1}
\int_{X-X_{i_0}} \left| \left\langle f, F(x) \right\rangle \right|^2 \mathrm{d}\mu(x) < \epsilon \quad \text{for all } f \in H.
\end{equation}
Let $x_1 \in X_{i_0}$ so that $F(x_1) \neq 0$ and put $\frac{F(x_1)}{\| F(x_1) \|}= e_1 \in H$. Define the map $G: X \rightarrow H$ by
$$
G(x) = 
\begin{cases} 
F(x) & \text{if } x \in X_{i_0}, \\
F(x) -\overline{\langle e_1, F(x) \rangle } e_{1} & \text{if } x \in X-X_{i_0}.
\end{cases}
$$
Then $G$ is close to $F$ in terms of 
$$
\int_X\|F(x)-G(x)\|^2 d \mu(x)=\int_{X-X_{i_0}}\left|\left\langle e_1, F(x)\right\rangle\right|^2 d \mu(x)<\epsilon,
$$
and 
\begin{align*}
\int_X | \left\langle F(x) - G(x) , f \right\rangle |^2 d\mu(x) &= | \left\langle e_1 , f \right\rangle| ^2 \int_{X-X_{i_0}} | \left\langle e_1 , F(x) \right\rangle |^2 d\mu(x) \\
&\leq \epsilon \|f\|^2, \quad (\text{for all } f \in H),
\end{align*}
where the last inequality holds according to (\ref{label_1}). So by the Corollary 4.3. of \cite{rahimi2006continuous}, $G$ is a continuous frame. To demonstrate that $G$ does not yield phase retrieval, choose vector  $e_2 \neq 0$ so that 
$
e_2 \in \{ F(x) \mid x \in X_{i_0}\}^\perp
$. Then $e_1 \perp e_2$.
Put $f=e_1 + 2e_2$ and $g=e_1 - 2e_2$. Then for almost every $x \in X_{i_0}$,
\begin{align*}
|\langle f, G(x) \rangle| &= |\langle e_1 + 2e_2, F(x)\rangle| \\
&=  |\langle e_1, F(x)\rangle| \\
&= |\langle e_1 - 2e_2, F(x)\rangle| \\
&=  |\langle g, G(x) \rangle| .
\end{align*}

Also, for almost every $x \in X - X_{i_0}$ we get
\begin{align*}
|\langle f, G(x) \rangle| &= |\langle e_1 + 2e_2, F(x) - \overline{\langle  e_1 , F(x) \rangle } e_{1} \rangle| \\
&= |\langle e_1 , F(x)  \rangle  
- \langle e_1,F(x) \rangle +
\langle 2e_2 , F(x)  \rangle 
| = |\langle 2e_2, F(x)\rangle|.
\end{align*}
Moreover, similarly
$$
|\langle g, G(x) \rangle| = |\langle e_1 - 2e_2, F(x) - \overline{\langle e_1,F(x) \rangle}  e_{1} \rangle| = |\langle 2e_2, F(x)\rangle|,
$$
for almost every $x \in X - X_{i_0}$.

So $|\langle f, G(x) \rangle| = |\langle g, G(x) \rangle| $ for almost all $x \in X$, but clearly $e_1$ and $e_2$ are linearly independent so $f \neq e^{i \theta} g \text { for any } \theta \in \mathbb{R}$. Hence, we have shown that $G$ does not do phase retrieval. This completes the proof.

\end{proof}

The following theorem states that if the perturbation \( G \) of the frame \( F \) is sufficiently small (i.e., \( \operatorname{sup}_{x \in X} \| F(x) - G(x) \| < \lambda \) for some \( \lambda > 0 \)), then \( G \) also forms a phase retrievable frame for \( H \). This statement implies a robustness in the property of phase retrievability under small changes to the frame elements. The following result can be shown in a similar approach to Theorem 2.5 in \cite{fard2016norm}. So its proof is deleted.

\begin{theorem}\label{Th_finite}
Let $(X,\mu)$ be a finite measure space such that 
\begin{equation}\label{eta_formula}
0 < \eta=\inf \{\mu(E): E \text{ measurable}, \; 0<\mu(E)<\infty\}.
\end{equation}
If $F : X \rightarrow H$ is a phase retrievable continuous frame for real hilbert space $H$ with frame bounds $A$ and $B$. Then there exists $\lambda>0$ such that any map $G : X \rightarrow H$, satisfying  $\operatorname{sup}_{x \in X} \| F(x) - G(x) \|<\lambda$, is also a phase retrievable frame for $H$.
\end{theorem}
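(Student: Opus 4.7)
The plan is to combine the atomic structure and uniform norm boundedness that $\eta > 0$ and $\mu(X) < \infty$ make available, with the quantitative characterization of phase retrieval from Theorem~\ref{phase_equvalent_cond}(d). First, Lemma~\ref{bessel_bound} gives a uniform bound $\|F(x)\| \leq \sqrt{B/\eta} =: M$, while Lemma~\ref{atom} shows that $X$ decomposes as a finite disjoint union of atoms on each of which $F$ (and any weakly measurable perturbation $G$) is essentially constant. This finiteness of the essential support of the frame is what makes stability possible in this setting, in contrast to the infinite-measure counterexample built in Theorem~\ref{non1}.

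Next, I would check that sufficiently small sup-norm perturbations are automatically continuous frames. The estimate
\[\|T_F f - T_G f\|_{L^2}^2 = \int_X |\langle f, F(x) - G(x)\rangle|^2 \, d\mu(x) \leq \lambda^2 \mu(X) \|f\|^2\]
gives $\|T_F - T_G\| \leq \lambda\sqrt{\mu(X)}$, so for $\lambda < \sqrt{A/\mu(X)}$ the map $G$ inherits frame bounds arbitrarily close to $A$ and $B$, and $\|G(x)\| \leq M + \lambda$ uniformly on $X$.

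The core of the argument is transferring the phase retrieval property. Theorem~\ref{phase_equvalent_cond}(d) provides a constant $\alpha > 0$ such that
\[\int_X |\langle f, F(x)\rangle|^2 |\langle g, F(x)\rangle|^2 d\mu(x) \geq \alpha \|f\|^2 \|g\|^2\]
for all $f, g \in H$. Using $\bigl||a|^2 - |b|^2\bigr| \leq (|a| + |b|)|a - b|$ together with the uniform bounds, one gets $\bigl||\langle h, F(x)\rangle|^2 - |\langle h, G(x)\rangle|^2\bigr| \leq (2M + \lambda)\lambda \|h\|^2$, and a two-term telescoping of the fourth-order integrand yields
\[\Bigl|\int_X |\langle f, F(x)\rangle|^2|\langle g, F(x)\rangle|^2 d\mu - \int_X |\langle f, G(x)\rangle|^2|\langle g, G(x)\rangle|^2 d\mu\Bigr| \leq C \lambda \|f\|^2 \|g\|^2,\]
for a constant $C$ depending only on $M$, $\mu(X)$, and an a priori upper bound on $\lambda$. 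Choosing $\lambda$ strictly smaller than $\min\bigl(\sqrt{A/\mu(X)},\, \alpha/C\bigr)$ then keeps the corresponding integral for $G$ strictly positive on every pair of nonzero vectors, so Theorem~\ref{phase_equvalent_cond} $(c)\Rightarrow(b)$ forces $G$ to be phase retrievable.

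The main obstacle I expect is the bookkeeping in the telescoping step and a consistent choice of constants: the constant $C$ depends on $M = \sqrt{B/\eta}$, so both hypotheses $\mu(X) < \infty$ and $\eta > 0$ play essential roles, the former to control $\|T_F - T_G\|$ in operator norm and the latter to furnish the uniform bound $M$ via Lemma~\ref{bessel_bound}. This reflects exactly why the two assumptions distinguish the present stability result from the infinite-measure destabilization in Theorem~\ref{non1}.
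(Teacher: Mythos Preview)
The paper does not actually supply a proof here: it only states that the result ``can be shown in a similar approach to Theorem~2.5 in \cite{fard2016norm}'' and then deletes the argument. So there is no in-paper proof to compare against directly.

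Your proposal is correct. The combination of the uniform bound $M=\sqrt{B/\eta}$ from Lemma~\ref{bessel_bound}, the finiteness of $\mu(X)$, and the quantitative characterization in Theorem~\ref{phase_equvalent_cond}(d) gives exactly the right ingredients: the telescoping estimate on the fourth-order integrand is routine, and the resulting constant $C$ depends only on $M$, $\mu(X)$, and an a~priori cap on $\lambda$, as you note. Choosing $\lambda<\min\bigl(\sqrt{A/\mu(X)},\,\alpha/C\bigr)$ makes $G$ a frame and keeps the integral in part~(d) bounded below by $(\alpha-C\lambda)\|f\|^2\|g\|^2>0$, so $G$ is phase retrievable. (You in fact obtain condition~(d) for $G$, not merely~(c); either suffices.)

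One small redundancy: you invoke the atomic decomposition from Lemma~\ref{atom} in your opening paragraph, but your actual argument never uses it---the uniform bound $M$, which Lemma~\ref{bessel_bound} already extracts from the atomic structure, together with $\alpha$ is all you need. If the approach in \cite{fard2016norm} that the paper points to proceeds by first reducing the continuous frame to a finite weighted discrete frame via the atomic decomposition and then quoting discrete stability, your route is slightly more direct, remaining at the level of the integral inequality throughout.
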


\begin{corollary}
Let $G$ be a compact group with a Haar measure $\mu$ and $\eta > 0$. Suppose $F_1 : G \rightarrow H$ is a phase retrievable continuous frame for a real Hilbert space $H$ with frame bounds $A, B$. Then, there exists $\lambda > 0$ such that any map $F_2 : G \rightarrow H$ satisfying $\operatorname{sup}_{x \in G} \| F_1(x) - F_2(x) \| < \lambda$ is also a phase retrievable frame for $H$.
\end{corollary}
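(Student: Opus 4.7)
The plan is to realize this corollary as a direct specialization of Theorem \ref{Th_finite}. The hypotheses of that theorem are (i) a finite measure space $(X,\mu)$ with $\eta > 0$, (ii) a real Hilbert space $H$, and (iii) a phase retrievable continuous frame $F : X \to H$ with bounds $A, B$. All I need to do is verify that $(G,\mu)$, where $G$ is a compact group equipped with its Haar measure, satisfies the measure-theoretic assumption, then invoke the theorem verbatim.

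First I would recall the classical fact that every compact Hausdorff topological group admits a unique (up to scaling) left Haar measure $\mu$ which is finite, since compactness forces $\mu(G) < \infty$ (one can normalize so that $\mu(G)=1$). Hence $(G,\mu)$ is a finite measure space. Combined with the standing hypothesis $\eta = \inf\{\mu(E) : 0 < \mu(E) < \infty\} > 0$, the pair $(G,\mu)$ fulfills condition (\ref{eta_formula}) of Theorem \ref{Th_finite}.

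Next, since $F_1 : G \to H$ is by assumption a phase retrievable continuous frame for the real Hilbert space $H$ with frame bounds $A$ and $B$, all remaining hypotheses of Theorem \ref{Th_finite} are in place. A direct application then produces a constant $\lambda > 0$ such that every map $F_2 : G \to H$ with $\sup_{x \in G} \|F_1(x) - F_2(x)\| < \lambda$ is automatically a phase retrievable continuous frame for $H$, which is exactly the conclusion.

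There is essentially no obstacle here beyond this reduction; the corollary is really a note pointing out that the compact-group/Haar-measure setting is a natural instance in which Theorem \ref{Th_finite} applies. The only mild subtlety worth flagging is the hypothesis $\eta > 0$: for non-discrete compact groups (such as a compact Lie group with its normalized Haar measure) one typically has $\eta = 0$, so the corollary is most relevant in the discrete compact setting (i.e., finite groups with counting-type Haar measure) or in any compact setting where the Haar measure happens to be purely atomic with a uniform positive lower bound on atom measures. The proof itself, however, requires nothing more than the verification above and a citation of Theorem \ref{Th_finite}.
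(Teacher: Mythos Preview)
Your proposal is correct and matches the paper's treatment: the corollary is stated immediately after Theorem \ref{Th_finite} with no separate proof, so the paper also intends it as a direct specialization obtained by noting that a compact group has finite Haar measure and invoking the theorem. Your additional remark about the $\eta>0$ hypothesis forcing the compact group to be essentially discrete is a useful observation, though not part of the paper's argument.
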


\section{Norm Retrieval}

In the study of continuous frames, understanding the orthogonality properties between subspaces generated by partitions of the frame set offers valuable insights into the structure and functionality of these frames. The following theorem establishes an important orthogonality property for continuous frames. This result not only highlights a fundamental property of norm retrievable continuous frames, but also generalizes a corresponding result for discrete frames presented in \cite{fard2016norm}. Let us now state and prove this significant orthogonality property.

\begin{theorem}\label{norm_cts1}
A continuous frame \( F: X \to H \) over a real Hilbert space \( H \) is norm retrievable if and only if for any measurable subset \( \Omega \subseteq X \),  we have the following orthogonality condition
\[
\operatorname{span}\{F(x) \mid x \in \Omega \}^\perp \perp \operatorname{span}\{F(x) \mid x \in \Omega^c\}^\perp.
\]
\end{theorem}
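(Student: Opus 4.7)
The plan is to reduce the absolute-value equality that defines norm retrieval to a sign condition (using that $H$ is real) and then transfer this to an orthogonality statement between two subspaces via the identity $\|f\|^2-\|g\|^2=\langle f+g,f-g\rangle$. Throughout, I intend to use the partition of $X$ induced by the sets where $\langle f+g,F(x)\rangle$ and $\langle f-g,F(x)\rangle$ vanish.

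For the forward implication, I would fix a measurable subset $\Omega\subseteq X$ and take arbitrary $u\in\operatorname{span}\{F(x)\mid x\in\Omega\}^{\perp}$ and $v\in\operatorname{span}\{F(x)\mid x\in\Omega^{c}\}^{\perp}$. Setting $f=\tfrac{1}{2}(u+v)$ and $g=\tfrac{1}{2}(u-v)$, on $\Omega$ one has $\langle u,F(x)\rangle=0$, which yields $\langle f,F(x)\rangle=\tfrac{1}{2}\langle v,F(x)\rangle=-\langle g,F(x)\rangle$, while on $\Omega^{c}$ one has $\langle v,F(x)\rangle=0$, which yields $\langle f,F(x)\rangle=\tfrac{1}{2}\langle u,F(x)\rangle=\langle g,F(x)\rangle$. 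In either case $|\langle f,F(x)\rangle|=|\langle g,F(x)\rangle|$ for every $x\in X$. Norm retrievability then forces $\|f\|=\|g\|$, i.e., $\langle u,v\rangle=\langle f+g,f-g\rangle=\|f\|^{2}-\|g\|^{2}=0$.

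For the converse, I would start with $f,g\in H$ satisfying $|\langle f,F(x)\rangle|=|\langle g,F(x)\rangle|$ for $\mu$-almost every $x$. Since $H$ is real, this is equivalent to $\langle f+g,F(x)\rangle\cdot\langle f-g,F(x)\rangle=0$ a.e. I would then define $\Omega=\{x\in X:\langle f+g,F(x)\rangle=0\}$, which is measurable by the weak measurability of $F$. By construction $f+g\in\operatorname{span}\{F(x)\mid x\in\Omega\}^{\perp}$. For a.e.\ $x\in\Omega^{c}$, since $\langle f+g,F(x)\rangle\neq 0$, the product relation forces $\langle f-g,F(x)\rangle=0$, so $f-g\in\operatorname{span}\{F(x)\mid x\in\Omega^{c}\}^{\perp}$. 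The hypothesized orthogonality between these two complements then yields $\langle f+g,f-g\rangle=0$, hence $\|f\|=\|g\|$.

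The main technical point I anticipate is the null-set issue in the converse: the set on which $\langle f-g,F(x)\rangle=0$ for $x\in\Omega^{c}$ is only known a.e., whereas the orthogonality statement in the theorem is a pointwise condition on the span. I plan to dispose of this by redefining $F$ on a $\mu$-null set (setting it to $0$ there), which affects neither the frame property, nor the norm retrievability, nor the spans appearing in the orthogonality condition, thereby upgrading the a.e.\ statement to a pointwise one. Apart from this, everything else is a direct algebraic manipulation built on $\|f\|^{2}-\|g\|^{2}=\langle f+g,f-g\rangle$ and the real-Hilbert-space reduction $|\alpha|=|\beta|\iff\alpha=\pm\beta$.
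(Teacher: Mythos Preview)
Your proposal is correct and follows essentially the same route as the paper: both directions hinge on the identity $\|f\|^2-\|g\|^2=\langle f+g,f-g\rangle$ and the real-scalar reduction $|\alpha|=|\beta|\iff\alpha=\pm\beta$, with the same partition of $X$ (your $\Omega$ is just the paper's $\Omega^c$ up to relabeling, and your substitution $f=\tfrac12(u+v)$, $g=\tfrac12(u-v)$ is the paper's computation read backwards). Your explicit handling of the null-set issue in the converse is in fact a bit more careful than the paper's own argument, which glosses over it.
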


\begin{proof}
Suppose that $F$ is norm retrieval and $ f \in \operatorname{span}\{F(x) \mid x \in \Omega\}^\perp$, $g \in \operatorname{span}\{F(x) \mid x \in \Omega^c\}^\perp$, then 
$\langle f , F(x) \rangle = 0$ for almost all $x \in \Omega$ and $\langle g , F(x) \rangle = 0$ for almost all $x \in \Omega^c$. Then 
$$
\langle f+g , F(x) \rangle = - \langle f-g , F(x) \rangle \quad \text{ for $\mu$ almost all } x \in \Omega,
$$
moreover 
$$
\langle f+g , F(x) \rangle =  \langle f-g , F(x) \rangle \quad \text{ for $\mu$ almost all } x \in \Omega^c.
$$
Hence $ | \langle f+g , F(x) \rangle | = | \langle f-g , F(x) \rangle | $ for almost all $x \in X$ and consequently $\| f + g \| = \|f - g\|$, so $\langle f , g \rangle = 0$, as required.

Conversely let $f, g \in H$ be such that for almost every $x \in X$, 
$$
|\langle f, F(x) \rangle| = |\langle g, F(x) \rangle|.
$$
We must show that $\|f\| = \|g\|$. Define the set
$$
\Omega = \{x \in X : \langle f, F(x) \rangle = \langle g, F(x) \rangle\},
$$
then
$$
\Omega^c = \{x \in X : \langle f, F(x) \rangle = -\langle g, F(x) \rangle\}.
$$
Consider the vectors $f + g$ and $f - g$. 
For $x \in \Omega^c$,
$$
\langle f + g, F(x) \rangle = \langle f, F(x) \rangle - \langle g, F(x) \rangle = 0.
$$
Hence, $f + g \in \operatorname{span}\{F(x) \mid x \in \Omega^c\}^\perp$.

Similarly, for $x \in \Omega$,
$$
\langle f - g, F(x) \rangle = \langle f, F(x) \rangle - \langle g, F(x) \rangle = 0.
$$
Thus, $f - g \in \operatorname{span}\{F(x) \mid x \in \Omega\}^\perp$. Given the orthogonality condition, these spans are orthogonal complements, so
$$
\langle f+g, f-g \rangle = 0.
$$
Expanding this, we get
$$
\|f\|^2 - \|g\|^2 = 0.
$$
This demonstrates that $\|f\| = \|g\|$, establishing that $F$ is a norm retrievable continuous frame.

\end{proof}

\begin{remark}
By Theorem \ref{norm_cts1} orthogonal Riesz basis over real Hilbert space are norm retrieval.
\end{remark}

The stability of frame properties under perturbations is a critical aspect that affects their practical applications in signal processing, communications, and data analysis. The following corollary studies the implications of such perturbations on a norm retrievable, but not phase retrievable, continuous frame defined over a real Hilbert space. In this study, we illustrate the specific conditions under which a norm-retrievable continuous frame loses its ability to maintain norm retrieval when subjected to  perturbations.
\begin{corollary}
Suppose that \( F: X \rightarrow H \) is a norm retrievable, but not phase retrievable continuous frame over the real Hilbert space with bounds $A,B$ and with respect to a $\sigma$-finite measure space \( (X, \mu) \). Then \( F \) does not preserve the norm retrievable property under perturbations.
\end{corollary}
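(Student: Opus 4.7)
The plan is to show, in the style of Theorem \ref{non1}, that for every $\varepsilon > 0$ one can produce a continuous frame $G$ with $\int_X \|F(x)-G(x)\|^2 \, d\mu(x) < \varepsilon$ which fails norm retrieval. First I would extract structural data from the hypotheses. Since $F$ is not phase retrievable, the equivalence $(a) \Leftrightarrow (b)$ of Theorem \ref{phase_equvalent_cond} supplies a measurable $S \subseteq X$ for which both $V_1 := \operatorname{span}\{F(x) \mid x \in S\}^\perp$ and $V_2 := \operatorname{span}\{F(x) \mid x \in S^c\}^\perp$ are non-trivial; pick nonzero $f \in V_1$ and $g \in V_2$. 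Because $F$ is norm retrievable, Theorem \ref{norm_cts1} forces $V_1 \perp V_2$, so in particular $\langle f, g\rangle = 0$.

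The central idea is to rotate $f$ slightly inside (a modified version of) $V_1$ so that it picks up a component along $g$, thereby destroying the orthogonality required by Theorem \ref{norm_cts1}. Concretely, for a small parameter $\alpha > 0$ I propose to set
\[
\delta(x) = \begin{cases} -\dfrac{\alpha\, \langle g, F(x)\rangle}{\|f + \alpha g\|^2}\,(f + \alpha g) & \text{if } x \in S, \\[4pt] 0 & \text{if } x \in S^c, \end{cases}
\]
and define $G(x) := F(x) + \delta(x)$. A direct computation, using $\langle f, F(x)\rangle = 0$ for $\mu$-a.e.\ $x \in S$, shows $\langle f + \alpha g,\, G(x)\rangle = 0$ for $\mu$-a.e.\ $x \in S$, while $g$ still annihilates $G(x)$ on $S^c$ because $G$ coincides with $F$ there. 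Nevertheless $\langle f + \alpha g,\, g\rangle = \alpha\|g\|^2 \neq 0$, so the orthogonality criterion of Theorem \ref{norm_cts1} fails for $\Omega = S$, and $G$ is not norm retrievable.

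Next I would verify that $G$ is close to $F$ and still a continuous frame. Using $f \perp g$ and the Bessel bound $B$ of $F$, one obtains
\[
\int_X \|F(x) - G(x)\|^2 \, d\mu(x) \leq \frac{\alpha^2 B \|g\|^2}{\|f\|^2 + \alpha^2\|g\|^2},
\]
which is smaller than $\varepsilon$ for $\alpha$ sufficiently small. The same computation, combined with Cauchy--Schwarz, yields $\int_X |\langle h,\, F(x)-G(x)\rangle|^2 \, d\mu(x) < A\|h\|^2$ for all $h \in H$ once $\alpha$ is small enough, so a standard continuous-frame perturbation argument (for instance Corollary 4.3 of \cite{rahimi2006continuous}) guarantees that $G$ is itself a continuous frame for $H$.

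The main obstacle, as I see it, is selecting $\delta$ so that it fulfils three competing requirements simultaneously: it must leave $V_2$ intact by vanishing on $S^c$; it must modify the perpendicular complement over $S$ in a visible, controlled manner so that $f$ gets replaced by $f + \alpha g \notin V_2^\perp$; and it must be globally small in the $L^2$ sense needed for the frame perturbation theorem. Aligning $\delta(x)$ with the direction $f + \alpha g$ and scaling by $\langle g, F(x)\rangle$ is exactly the choice that reconciles these demands, and crucially exploits the already-established orthogonality $f \perp g$ from norm retrieval.
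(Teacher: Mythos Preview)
Your argument is correct and follows essentially the same route as the paper: extract $S$, $f$, $g$ from the failure of phase retrieval via Theorem~\ref{phase_equvalent_cond}, use Theorem~\ref{norm_cts1} to get $f\perp g$, then perturb $F$ only on $S$ by a rank-one term proportional to $\langle g,F(x)\rangle$ so that a vector of the form $cf+dg$ lands in the new $\operatorname{span}\{G(x):x\in S\}^{\perp}$ while $g$ stays in $\operatorname{span}\{G(x):x\in S^{c}\}^{\perp}$, violating Theorem~\ref{norm_cts1}. The only cosmetic difference is that the paper perturbs along the direction $f$ (obtaining the annihilator $\tfrac{2\sqrt{B}\|g\|}{\|f\|}f+\epsilon g$) whereas you perturb along $f+\alpha g$ directly; the estimates and the appeal to the continuous-frame perturbation result are the same.
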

\begin{proof}
Applying the assumption and Theorem \ref{phase_equvalent_cond} there exists a measurable subset $S \subset X$ such that 
$
\operatorname{\overline{cspan}}\left\{F(x)  \mid  x \in S\right\} \neq H$ and $\operatorname{\overline{cspan}}\left\{F(x)  \mid x \notin S\right\} \neq H.
$
This implies the existence of nonzero vectors $f \in \operatorname{\overline{cspan}}\left\{F(x) \mid x \in S \right\}^{\perp}$ and $g \in \operatorname{\overline{cspan}}\left\{F(x) \mid x \neq S \right\}^{\perp}$. Using the norm retrievability of $F$ we have $\langle f, g\rangle=0$ by Theorem \ref{norm_cts1}. Define $F' : X \rightarrow H$ by \( F'(x) = F(x) - \epsilon \delta(x) \), where $\epsilon < 2 \sqrt{A}$ and \( \delta: X \rightarrow H \) is defined as follows
$$
\delta(x)=\left\{\begin{array}{cl}
\frac{\left\langle F(x), g\right\rangle f}{2 \sqrt{B}\|f\|\|g\|} & \text { if } x \in S \\
0 & \text { if } x \in S^c 
\end{array}\right. .
$$
Then $\left\|\delta(x) \right\|<1$ for all $x \in X$.
Then $F'$ is a continuous frame. Indeed 
\begin{align*}
\int_X\left|\left\langle h, F'(x)-F(x)\right\rangle\right|^2 d x &= \int_S \epsilon^2 \left|\left \langle h, \delta(x) \right\rangle\right|^2 d x \\
&\leq \frac{\epsilon^2 \|h\|^2}{4B \|g\|^2} \int_S \left|\left\langle g, F(x)\right\rangle\right|^2 d x \\
&\leq \frac{\epsilon^2}{4} \|h\|^2 .
\end{align*}
So $F'$ is a continuous frame. Moreover, $F^{\prime}$ is not a norm retrievable frame. Indeed, 
\begin{align*}
&\left\langle\frac{2 \sqrt{B}\|g\| f}{\|f\|} +\epsilon g, F^{\prime}(x) \right\rangle = \left\langle\frac{2 \sqrt{B}\|g\| f}{\|f\|}, F^{\prime}(x) \right\rangle + \left\langle \epsilon g, F^{\prime}(x) \right\rangle \\
&= \left\langle\frac{2 \sqrt{B}\|g\| f}{\|f\|}, F(x) - \epsilon \delta(x) \right\rangle + \left\langle \epsilon g, F(x) - \epsilon \delta(x) \right\rangle \\
&=0 - \epsilon \langle F(x),g \rangle + \langle \epsilon g , F(x) \rangle + 0 = 0,
\end{align*}
 for all $x \in S$ and $\left\langle g, F^{\prime}(x)\right\rangle=0$ for all $x \in S^c$. Also we have
$$
\left\langle\frac{2 \sqrt{B}\|g\| f}{\|f\|} + \epsilon g, g\right\rangle=\epsilon\|g\|^2>0.
$$
Consequently, $\operatorname{\overline{cspan}}\left\{F^{\prime}(x) \mid x \in S\right\}^{\perp}$ is not orthogonal to $\operatorname{\overline{cspan}}\left\{F^{\prime}(x) \mid x \in S^c \right\}^{\perp}$, and theorem \ref{norm_cts1} established that $F^\prime$ is not a norm retrievable frame.
\end{proof}

\section{Tensor product Hilbert spaces}

The tensor product $H_1 \otimes H_2$ of the Hilbert spaces $H_1$ and $H_2$ is the set of all antilinear maps $T: H_2 \longrightarrow$ $H_1$ such that $\sum_j\left\|T \alpha_j \right\|^2<\infty$ for an orthonormal basis $\left\{\alpha_j\right\}$ for $H_2$. Also $H_1 \otimes H_2$ is a Hilbert space with the norm
$$
\|T\|^2=\sum_j \| T \alpha_j \|^2,
$$
and the associated inner product
$$
\left\langle T_1, T_2\right\rangle=\sum_j\left\langle T_1 \alpha_j, T_2 \alpha_j\right\rangle,
$$
where $\left\{\alpha_j\right\}$ is any orthonormal basis for $H_2$ and $T_1, T_2$ are the antilinear maps from $H_2$ onto $H_1$. For any vector $f \in H_1$ and $g \in H_2$, the mapping defined by
$$
\left(f \otimes g\right)(t)=\left\langle g, t\right\rangle f, \quad (t \in H_2)
$$
belongs to $H_1 \otimes H_2$.
Also the inner product
\begin{equation}\label{Tensor}
\left\langle f_1 \otimes g_1, f_2 \otimes g_2\right\rangle_{\otimes}=\left\langle f_1, f_2\right\rangle_{H_1}\left\langle g_1, g_2\right\rangle_{H_2}, \quad f_1, f_2 \in H_1, g_1, g_2 \in H_2,
\end{equation}
makes it into a Hilbert space. See \cite{folland2016course}.

Let $H$ be the tensor product $H=H_1 \otimes H_2$ of separable complex Hilbert spaces, and $(X, \mu)=\left(X_1 \times X_2, \mu_1 \otimes \mu_2\right)$ be the product of measure spaces with $\sigma$-finite positive measures $\mu_1, \mu_2$. The mapping $F: X \rightarrow H$ is called a continuous frame for the tensor product Hilbert space $H$ with respect to $(X, \mu)$, if\\
(1) $F$ is weakly-measurable, i.e., for all $f \in H$,
$$
x=\left(x_1, x_2\right) \rightarrow\langle f, F(x)\rangle
$$
is a measurable function on $X$.\\
(2) There exist constants  $0<A\leq B<\infty$ such that
$$
A\|f\|^2 \leqslant \int_X|\langle f, F(x)\rangle|^2 \mathrm{~d} \mu(x) \leqslant B\|f\|^2, \quad \forall f \in H .
$$

Let $H_1$ and $H_2$ be separable Hilbert spaces, $H=H_1 \otimes H_2$, and let $(X, \mu)=$ $\left(X_1 \times X_2, \mu_1 \otimes \mu_2\right)$ be the product of measure spaces with $\sigma$-finite positive measures $\mu_1, \mu_2$. The mapping $F=F_1 \otimes F_2: X \rightarrow H$ is a continuous frame for $H$ with respect to $(X, \mu)$ if and only if $F_1$ is a continuous frame for $H_1$ with respect to $\left(X_1, \mu_1\right)$, and $F_2$ is a continuous frame for $H_2$ with respect to $\left(X_2, \mu_2\right)$. Tensor product of continuous frames was introduced in \cite{balazs2022continuous}.
\begin{definition}
Let $H$ be the tensor product $H=H_1 \otimes H_2$ of separable complex Hilbert spaces, and $(X, \mu)=\left(X_1 \times X_2, \mu_1 \otimes \mu_2\right)$ be the product of measure spaces with $\sigma$-finite positive measures $\mu_1, \mu_2$. A continuous frame \( F: X \to H \) with respect to \( (X, \mu) \) is called phase retrieval if for all $T_1, T_2 \in H_1 \otimes H_2$ satisfying 
$$
\left|\left\langle T_1, F(x)\right\rangle\right|=\left|\left\langle T_2, F(x)\right\rangle\right| \quad \text{for } \mu\text{-almost all } x \in X,
$$ 
it follows that \( T_1 = e^{i\theta}  T_2 \) for some \( \theta \in \mathbb{R} \).
\end{definition}

In the next result, we investigate phase retrieval property of tensor product of continuous frames. This result extends Theorem 3.3 \cite{dowerah2023phase} of discrete case.

\begin{theorem}\label{Th_tensor_phase}
Let $H_1$ and $H_2$ be separable Hilbert spaces, $H=H_1 \otimes H_2$, and let $(X, \mu)=$ $\left(X_1 \times X_2, \mu_1 \otimes \mu_2\right)$ be the product of measure spaces with $\sigma$-finite positive measures $\mu_1, \mu_2$. The mapping $F=F_1 \otimes F_2: X \rightarrow H$ is a phase retrieval continuous frame for $H$ with respect to $(X, \mu)$ if and only if $F_1$ is a phase retrieval continuous frame for $H_1$ with respect to $\left(X_1, \mu_1\right)$, and $F_2$ is a phase retrieval continuous frame for $H_2$ with respect to $\left(X_2, \mu_2\right)$.
\end{theorem}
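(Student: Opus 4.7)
The plan is to prove the two implications separately. The backward direction is a test-vector reduction. Assuming \(F = F_1 \otimes F_2\) does phase retrieval, take \(u_1, u_2 \in H_1\) with \(|\langle u_1, F_1(x_1)\rangle| = |\langle u_2, F_1(x_1)\rangle|\) for \(\mu_1\)-a.e.\ \(x_1\). Fix any nonzero \(v \in H_2\) and set \(T_i = u_i \otimes v\); using \eqref{Tensor} gives \(|\langle T_1, F(x)\rangle| = |\langle T_2, F(x)\rangle|\) \(\mu\)-a.e., so \(T_1 = e^{i\theta} T_2\) and hence \(u_1 = e^{i\theta} u_2\). Phase retrieval of \(F_2\) follows symmetrically.

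For the forward direction, I would identify each \(T \in H_1 \otimes H_2\) with its antilinear representative \(T : H_2 \to H_1\) so that \(\langle T, u \otimes v\rangle_{\otimes} = \langle Tv, u\rangle_{H_1}\), with antilinear adjoint \(T^{\ast} : H_1 \to H_2\) characterized by \(\langle Tv, u\rangle_{H_1} = \langle T^{\ast} u, v\rangle_{H_2}\). Given \(T_1, T_2\) with \(|\langle T_1, F_1(x_1) \otimes F_2(x_2)\rangle| = |\langle T_2, F_1(x_1) \otimes F_2(x_2)\rangle|\) \(\mu\)-a.e., Fubini's theorem yields, for \(\mu_2\)-a.e.\ \(x_2\), \(|\langle T_1 F_2(x_2), F_1(x_1)\rangle| = |\langle T_2 F_2(x_2), F_1(x_1)\rangle|\) \(\mu_1\)-a.e.\ in \(x_1\); phase retrieval of \(F_1\) then produces a unimodular function \(\psi(x_2)\) such that \(T_1 F_2(x_2) = \psi(x_2)\, T_2 F_2(x_2)\) for \(\mu_2\)-a.e.\ \(x_2\) (the degenerate case \(T_2 \equiv 0\) forces \(T_1 = 0\) by \(\mu_2\)-completeness of \(F_2\) and bounded antilinearity of \(T_1\)).

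The main obstacle is to show that \(\psi\) is essentially constant. Pairing with arbitrary \(u \in H_1\) rewrites the identity as \(\langle T_1^{\ast} u, F_2(x_2)\rangle = \psi(x_2)\langle T_2^{\ast} u, F_2(x_2)\rangle\); taking absolute values and applying phase retrieval of \(F_2\) to \(T_1^{\ast} u\) and \(T_2^{\ast} u\) gives \(T_1^{\ast} u = e^{i\gamma(u)} T_2^{\ast} u\) whenever \(T_2^{\ast} u \neq 0\), so \(\psi(x_2) = e^{i\gamma(u)}\) on \(E_u := \{x_2 : \langle T_2^{\ast} u, F_2(x_2)\rangle \neq 0\}\). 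I would then show \(\gamma\) does not depend on \(u\): for any \(u_1, u_2 \in H_1\) with \(T_2^{\ast} u_1, T_2^{\ast} u_2\) linearly independent, if \(E_{u_1} \cap E_{u_2}\) had measure zero then \(|\langle T_2^{\ast} u_1 + T_2^{\ast} u_2, F_2(\cdot)\rangle| = |\langle T_2^{\ast} u_1 - T_2^{\ast} u_2, F_2(\cdot)\rangle|\) \(\mu_2\)-a.e., and phase retrieval of \(F_2\) would force linear dependence, a contradiction; hence \(\gamma(u_1) = \gamma(u_2)\). When \(\operatorname{range}(T_2^{\ast})\) has dimension at least two this pins \(\gamma\) down as a single constant \(\alpha\), whence \(T_1 = e^{i\alpha} T_2\). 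The remaining rank-one case forces \(T_2 = w \otimes v_0\); from \(T_1 F_2(x_2) \in \operatorname{span}(w)\) \(\mu_2\)-a.e.\ and \(\mu_2\)-completeness of \(F_2\) it follows that \(T_1 = w \otimes v_0'\), and phase retrieval of \(F_2\) applied to \(v_0, v_0'\) yields \(v_0' = e^{i\alpha} v_0\) and therefore \(T_1 = e^{i\alpha} T_2\), completing the proof.
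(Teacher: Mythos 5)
Your proof is correct and follows essentially the same strategy as the paper's: the easy direction by testing on simple tensors $u\otimes v$, and the hard direction by slicing via $\langle T, F_1(x_1)\otimes F_2(x_2)\rangle = \langle T F_2(x_2), F_1(x_1)\rangle$, extracting unimodular phases from component-wise phase retrieval, and forcing those phases to coincide by showing the relevant supports must intersect in a set of positive measure (else phase retrieval of a component frame would be violated by the sum/difference vectors). The only organizational difference is that the paper carries a second family of phases $\beta_x$ indexed by points $x\in X_1$ and links $\alpha_{y_0}=\beta_{x_0}=\alpha_{y_1}$ through a common point $x_0\in\Delta_0\cap\Delta_1$, whereas you index $\gamma(u)$ by vectors $u\in H_1$ through the adjoint $T^{*}$; your explicit treatment of the degenerate cases ($T_2=0$ and $\operatorname{rank} T_2^{*}=1$) is more careful than the paper, which leaves them implicit.
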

\begin{proof}

Fist suppose \(F = F_1 \otimes F_2: X \to H \) is a phase retrieval frame for \(H\). Let \(f_1, f_2 \in H_1\) such that
\begin{equation}\label{Eq_5-1}
\left|\langle f_1, F_1(x_1) \rangle\right| = \left|\langle f_2, F_1(x_1) \rangle\right| \quad \text{for } \mu_1\text{-almost all } x_1 \in X_1.
\end{equation}
Choose any non-zero \(g \in H_2\) by (\ref{Eq_5-1}) we get
\[
\left|\langle f_1 \otimes g, F(x_1, x_2) \rangle\right| = \left|\langle f_2 \otimes g, F(x_1, x_2) \rangle\right| \quad \text{for } \mu\text{-almost all } (x_1, x_2) \in X.
\]
Due to the phase retrieval property of \(F\) in the space \(H\), it follows that \(f_1 \otimes g = e^{i\theta} (f_2 \otimes g)\) for some \(\theta \in \mathbb{R}\). This directly implies that \(f_1 = e^{i\theta} f_2\), establishing that \(F_1\) is a phase retrieval frame for \(H_1\). A parallel argument applies to \(F_2\) by considering vectors in \(H_2\) and choose any non-zero element of \(H_1\) in the tensor product. Hence, \(F_2\) is also a phase retrieval frame for \(H_2\).

Conversely, assume that \(F_1\) and \(F_2\) are phase retrieval continuous frames for \(H_1\) and \(H_2\), respectively. We show that \(F = F_1 \otimes F_2: X \to H\) is a phase retrieval frame for \(H\), where \(H = H_1 \otimes H_2\) and \(X = X_1 \times X_2\). Consider any \(T_1 , T_2 \in H_1 \otimes H_2\) such that
\[
\left|\left\langle T_1, F(x, y)\right\rangle\right| = \left|\left\langle T_2, F(x, y)\right\rangle\right| \quad \text{for } \mu\text{-almost all } (x, y) \in X.
\]
Then we get
\[
\left|\left\langle T_1, F_1(x) \otimes F_2(y) \right\rangle \right| = \left|\left\langle T_2, F_1(x) \otimes F_2(y) \right\rangle \right| \quad \text{for } \mu\text{-almost all } (x, y) \in X.
\]
Hence we have
$$
\left|\left\langle T_1(F_2 (y)), F_1(x) \right\rangle \right| = \left|\left\langle T_2( F_2(y) ), F_1(x) \right\rangle \right|
 \quad \text{for } \mu\text{-almost all } (x, y) \in X,
$$
and
$$
\left|\left\langle T_{1}^* (F_1 (x)), F_2(y) \right\rangle \right| = \left|\left\langle T_2^* ( F_1(x) ), F_2(y) \right\rangle \right|
 \quad \text{for } \mu\text{-almost all } (x, y) \in X.
$$
Since \(F_1\) and \(F_2\) does phase retrieval, there exist scalars \(\alpha_y, \beta_x\) associated to \(y\) and \(x\) respectively so that \(\left|\alpha_y\right| = \left|\beta_x\right| = 1\),
$$
T_1(F_2(y)) = \alpha_y T_2(F_2(y)) \quad \text{for } \mu_2\text{-almost all } y \in X_2,
$$
and
$$
T_{1}^* (F_1(x)) = \beta_x T_{2}^* (F_1(x)) \quad \text{for } \mu_1\text{-almost all } x \in X_1.
$$
Now for each $x \in X$ and $y \in Y$, we can write
\begin{align*}
\left\langle T_1(F_2 (y)), F_1(x) \right\rangle &= \alpha_y \left\langle T_2(F_2 (y)), F_1(x) \right\rangle  \\
&= \alpha_y \left\langle F_2 (y),T_2^*( F_1(x) )\right\rangle \\
&= \frac{\alpha_y}{\beta_x} \left\langle F_2 (y),T_1^*( F_1(x) )\right\rangle.
\end{align*}
So,
\begin{equation}\label{a_y0}
\beta_x \left\langle T_1(F_2 (y)), F_1(x) \right\rangle = \alpha_y \left\langle T_1(F_2 (y)), F_1(x) \right\rangle.
\end{equation}
If $T_1(F_2(y)) = 0$ for some $y \in X_2$, then $\beta_x$ can be any fixed unimodular constant. Let there exist  $y_0,y_1 \in X_2$ such that $T_1(F_2(y_0)) \neq 0$, $T_1(F_2(y_1)) \neq 0$
and
$$
\Delta_k = \{ x \in X_1 \; :  \; \left\langle T_1(F_2 (y_k)), F_1(x) \right\rangle \neq 0 \} \quad k = 0,1 .
$$
Also $\mu(\Delta_0), \mu(\Delta_1) > 0 $, since $F_1$ is a continuous frame so is $\mu$-complete. Moreover $\mu(\Delta_0 \cap \Delta_1) > 0$. In fact if $\mu(\Delta_0 \cap \Delta_1) = 0$, then there exist $\Gamma \subset X_1$ such that
$\Gamma \subset {\Delta_0}^c \text{ with } \Gamma^c \subset {\Delta_1}^c$ almost $\mu_1$ every where and we have
$$
\left\langle T_1(F_2 (y_0)), F_1(x) \right\rangle = 0, \quad \text{ for all } x \in \Gamma,
$$
and
$$
\left\langle T_1(F_2 (y_1)), F_1(x) \right\rangle = 0, \quad \text{ for all } x \in \Gamma^c,
$$
which is a contradiction to the part (a) of Theorem \ref{phase_equvalent_cond}, since $F_1(x)$ is phase retrieval continuous frame.

Assume $x_0 \in \Delta_0 \cap \Delta_1$ so we have
$$
\left\langle T_1(F_2 (y_0)), F_1(x_0) \right\rangle \neq 0, \quad \left\langle T_1(F_2 (y_1)), F_1(x_0) \right\rangle \neq 0.
$$
Hence by (\ref{a_y0})
$$
\alpha_{y_0} = \beta_{x_0} = \alpha_{y_1}.
$$
i.e. $\alpha_{y_0} = \alpha_{y_1}$.

\end{proof}

It is worthwhile to note that if \( F \) is a norm retrieval frame, then \( F_1 \) and \( F_2 \) are also norm retrieval frames. Indeed, let \( F_1: X_1 \to H_1 \) and \( F_2: X_2 \to H_2 \) be continuous frames for their respective Hilbert spaces. Assume that the tensor product frame \( F: X_1 \times X_2 \to H_1 \otimes H_2 \) is norm retrieval for \( H_1 \otimes H_2 \). Then \( F_1 \) is norm retrieval for \( H_1 \), and \( F_2 \) is norm retrieval for \( H_2 \). This result can be established using a similar argument as in Theorem \ref{Th_tensor_phase}. The converse of this result is established in the following theorem.

\begin{theorem}
Suppose \( F_1: X_1 \rightarrow H_1 \) is a Parseval continuous frame and \( F_2: X_2 \rightarrow H_2 \) is a norm retrieval continuous frame for their respective Hilbert spaces \(H_1\) and \(H_2\). Then the tensor product frame \( F: X_1 \times X_2 \rightarrow H_1 \otimes H_2 \) defined by \( F(x_1, x_2) = F_1(x_1) \otimes F_2(x_2) \) is norm retrieval for \(H_1 \otimes H_2\).
\end{theorem}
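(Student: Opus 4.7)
The plan is to integrate out the $x_1$ variable using the Parseval property of $F_1$ and reduce the problem to a pointwise (in $x_1$) norm comparison in $H_2$, which is then handled by the norm-retrieval property of $F_2$. The essential bookkeeping device is an adjoint operation $T\mapsto T^{*}$ on $H_1\otimes H_2$, realized through the identification of tensors with antilinear maps $H_2\to H_1$ used throughout Section~5.

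First, for each $T\in H_1\otimes H_2$ I would introduce the adjoint $T^{*}:H_1\to H_2$, the unique antilinear map determined by
\[
\langle T(g),f\rangle_{H_1}=\langle T^{*}(f),g\rangle_{H_2}\qquad(f\in H_1,\ g\in H_2),
\]
so that in particular $(f_0\otimes g_0)^{*}=g_0\otimes f_0$. A standard double orthonormal basis expansion yields $\|T\|_{H_1\otimes H_2}=\|T^{*}\|_{H_2\otimes H_1}$, and combining this with the paper's pairing formula $\langle T,f\otimes g\rangle_{\otimes}=\langle T(g),f\rangle_{H_1}$ gives the key identity
\[
\langle T,f\otimes g\rangle_{H_1\otimes H_2}=\langle T^{*}(f),g\rangle_{H_2}.
\]

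Second, I would translate the hypothesis and apply norm retrieval of $F_2$. Given $T_1,T_2\in H_1\otimes H_2$ with $|\langle T_1,F_1(x_1)\otimes F_2(x_2)\rangle|=|\langle T_2,F_1(x_1)\otimes F_2(x_2)\rangle|$ for $(\mu_1\otimes\mu_2)$-a.e.\ $(x_1,x_2)$, the identity above becomes
\[
|\langle T_1^{*}(F_1(x_1)),F_2(x_2)\rangle_{H_2}|=|\langle T_2^{*}(F_1(x_1)),F_2(x_2)\rangle_{H_2}|
\]
a.e. By Fubini, there is a $\mu_1$-conull set $E\subset X_1$ such that for each $x_1\in E$ this equality holds for $\mu_2$-a.e.\ $x_2$. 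Applying the norm-retrieval property of $F_2$ to the fixed pair $T_1^{*}(F_1(x_1)),T_2^{*}(F_1(x_1))\in H_2$ yields $\|T_1^{*}(F_1(x_1))\|_{H_2}=\|T_2^{*}(F_1(x_1))\|_{H_2}$ for $\mu_1$-a.e.\ $x_1$. Third, I would integrate this equality over $X_1$. Fixing an orthonormal basis $\{\alpha_j\}$ of $H_2$ and using the adjoint identity,
\[
\|T_i^{*}(F_1(x_1))\|_{H_2}^{2}=\sum_{j}|\langle T_i(\alpha_j),F_1(x_1)\rangle_{H_1}|^{2},
\]
so Tonelli's theorem together with the Parseval identity $\int_{X_1}|\langle h,F_1(x_1)\rangle|^{2}\,d\mu_1=\|h\|_{H_1}^{2}$ applied to each $h=T_i(\alpha_j)$ delivers
\[
\int_{X_1}\|T_i^{*}(F_1(x_1))\|_{H_2}^{2}\,d\mu_1(x_1)=\sum_{j}\|T_i(\alpha_j)\|_{H_1}^{2}=\|T_i\|_{H_1\otimes H_2}^{2}.
\]
Integrating the pointwise equality from the previous step then forces $\|T_1\|_{H_1\otimes H_2}=\|T_2\|_{H_1\otimes H_2}$, which is exactly norm retrieval for $F$.

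The main obstacle is the antilinear bookkeeping: one must check under the paper's conventions that $T^{*}$ is well defined and antilinear, that the Hilbert--Schmidt identity $\|T\|=\|T^{*}\|$ really holds, and that $x_1\mapsto T_i^{*}(F_1(x_1))$ is weakly measurable so that norm retrieval of $F_2$ may be invoked for fixed $x_1$. The asymmetry in the hypothesis (\emph{Parseval} on $F_1$, only \emph{norm retrieval} on $F_2$) is essential: only Parseval yields the exact equality $\int|\langle h,F_1\rangle|^{2}\,d\mu_1=\|h\|^{2}$, which is what converts the a.e.\ pointwise equality of $H_2$-norms into equality of the full tensor-product norms; if $F_1$ were merely norm retrieval or a general frame, one would only obtain two-sided inequalities and the argument would collapse.
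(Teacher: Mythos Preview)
Your proposal is correct and follows essentially the same route as the paper: rewrite the hypothesis as $|\langle T_i^{*}F_1(x_1),F_2(x_2)\rangle|$, apply norm retrieval of $F_2$ for fixed $x_1$ to obtain $\|T_1^{*}F_1(x_1)\|=\|T_2^{*}F_1(x_1)\|$ a.e., and then integrate over $X_1$ using the Parseval identity for $F_1$ together with an orthonormal basis of $H_2$ to recover $\|T_1\|=\|T_2\|$. You are simply more explicit than the paper about the antilinear adjoint, Fubini/Tonelli, and the measurability needed to fix $x_1$.
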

\begin{proof}
For any \( T_1, T_2 \in H_1 \otimes H_2 \), assume that
\[
\left|\left\langle T_1, F(x, y)\right\rangle\right| = \left|\left\langle T_2, F(x, y)\right\rangle\right| \quad \text{for all } (x, y) \in X_1 \times X_2.
\]
This implies,
\[
\left|\left\langle T_1 F_2(y), F_1(x)\right\rangle\right| = \left|\left\langle T_2 F_2(y), F_1(x)\right\rangle\right| \quad \text{for all } (x, y),
\]
which by the norm retrieval property of \( F_2 \) yields to
\[
\left\|T_1^* F_1(x)\right\| = \left\|T_2^* F_1(x)\right\| \quad \text{for a.e } x \in X_1.
\]

Assume that \( \{e_j\}_{j \in J} \)
is an orthonormal basis  of \(H_2\), it follows that
\begin{align*}
 \|T_1\|^2 = \sum_{j \in J} \left\|T_1 e_j\right\|^2 &= \sum_{j \in J}  \int_{X_1} \left|\left\langle T_1(e_j) ,F_1(x) \right\rangle\right|^2 \, d\mu_1(x) \\
 &= \int_{X_1} \sum_{j \in J} \left|\left\langle e_j , {T_1}^* F_1(x) \right\rangle\right|^2 \, d\mu_1(x) \\
 &= \int_{X_1} \| {T_1}^* F_1(x) \|^2 d\mu_1(x) \\
 &= \int_{X_1} \| {T_2}^* F_1(x) \|^2 d\mu_1(x) = \|T_2\|^2 .
\end{align*}

\end{proof}

\bibliographystyle{amsplain}
\bibliography{sample2}

\end{document}